\newtheorem{theorem}{Theorem}[section]
\newtheorem{lemma}{Lemma}[section]
\newtheorem{proposition}{Proposition}[section]
\newtheorem*{problem*}{Problem}
\theoremstyle{definition}
\newtheorem{definition}{Definition}[section]
\newtheorem{remark}{Remark}[section]
\DeclareMathOperator{\tr}{tr}
\numberwithin{equation}{section}
\title{\bf Unitary monodromies of rank two Fuchsian systems with $(n+1)$ singularities}   
\author{Shunya Adachi} 
\date{}
\begin{document}
%%%%%%%%%%%%%%%%
\maketitle
%
%%%%%%%%%%%%%%
%
\begin{abstract}
We study the unitarity of monodromies of rank two Fuchsian systems of SL type with $(n+1)$ regular singularities on the Riemann sphere,
namely, we give a sufficient and necessary condition for the monodromy group to be conjugate to a subgroup of a special unitary group $\mathrm{SU}(p,q)$. 
When $n\ge 3$, the moduli space of irreducible monodromies can be realized as an affine algebraic set in $\mathbb{C}^m$ for some $m \in \mathbb{N}$.
In this paper, we give a characterization and construction of unitary monodromies in terms of this affine algebraic set.
The signatures of unitary monodromies are also classified.

{\it Key Words and Phrases.} Fuchsian system, unitary monodromy, monodromy invariant Hermitian form.

{\it 2020 Mathematical Subject Classification.} 34M35, 34M15.

\end{abstract}

%%%%%%%%%%%%%%%%
%%%%%%%%%%%%%%%%
\section{Introduction}
\label{Section_introduction}
%%%%%%%%%%%%%%%%
%%%%%%%%%%%%%%%%
Consider a rank two Fuchsian system of SL type  
\begin{equation}\label{Eq}
\frac{dU}{dx}=\left(\sum_{j=1}^n \frac{A_j}{x-t_j}\right)U \qquad 
(A_j \in \mathfrak{sl}(2,\mathbb{C})).
\end{equation}
We assume
$
 A_{n+1}:=-\sum_{j=1}^n A_j \neq O
$
and
\begin{equation}\label{res_diag}
 A_j \sim \frac{1}{2} \begin{pmatrix}
	\theta_j & \\
	& -\theta_j
	\end{pmatrix},
 \quad 
 \theta_j \notin \mathbb{Z} \quad (1\le j \le n+1).
\end{equation}
Then, the system \eqref{Eq} has regular singular points on 
$S=\{t_1,\ldots, t_n,t_{n+1}=\infty\} \subset \mathbb{P}^1$
and the local monodromy at $x=t_j$ is given by the conjugacy class of 
\begin{equation}\label{Eq_localmonodromy}
e^{2\pi \sqrt{-1}A_j} \sim \begin{pmatrix} 
 e(\theta_j) & \\
  & e(-\theta_j)
  \end{pmatrix},
\end{equation}
where $e(\alpha):=\exp{(\pi\sqrt{-1}\alpha)}$ for $\alpha \in \mathbb{C}$.
The analytic continuation of a fundamental solution matrix of \eqref{Eq} induces an $\mathrm{SL}(2,\mathbb{C})$-representation of the fundamental group $\pi_1(\mathbb{P}^1\setminus S)$.
We call it the monodromy representation. 
The image of the monodromy representation is a subgroup of the special linear group $\mathrm{SL}(2,\mathbb{C})$ and is called the monodromy group. 
The isomorphism class of the monodromy representation is called the monodromy of \eqref{Eq}. 

In this paper, we consider the unitarity of the monodromy of \eqref{Eq}, namely, we give a sufficient and necessary condition for the monodromy group to be conjugate to a subgroup of a special unitary group $\mathrm{SU}(p,q)$. 
Moreover, we actually construct such monodromies.

When $n=2$, the system \eqref{Eq} is equal to the Gauss hypergeometric equation (of SL type), which is rigid, i.e., free from accessory parameters.
In this case, the monodromy is determined only by the local monodromies \eqref{Eq_localmonodromy}.
Therefore, the unitarity condition for the monodromy has to be given by a condition for $\theta_j$'s.
In fact, it is known that the monodromy is unitary if the local exponents are all real numbers.
On the other hand, when $n\ge 3$, the system \eqref{Eq} has $2(n-2)$ accessory parameters. 
In this case, the monodromy is not determined only by the local monodromies. 
Hence, the unitarity condition is not so simple as the case $n=2$.

\medskip

Recently, the author \cite{Ada} gave a characterization and construction of irreducible unitary monodromies in the case $n=3$.
The aim of this paper is to generalize this result to the case of an arbitrary number of singularities. 
We shall explain the contents of this paper briefly.
Let $\mathcal{M}(a)^{irr}$ be the moduli space of irreducible $\mathrm{SL}(2,\mathbb{C})$-representations of $\pi_1(\mathbb{P}^1\setminus S)$ with prescribed local monodromies.
Here the parameter $a\in\mathbb{C}^{n+1}$ denotes the local monodromy data on each singular point of \eqref{Eq}.
Then the moduli space $\mathcal{M}(a)^{irr}$ can be realized as an affine algebraic set $\mathcal{S}(a) \subset \mathbb{C}^{m}$, where 
\[
m=\begin{cases} 
	3 & n=3, \\
	\binom{n}{3}+\binom{n}{2} & n\ge 4.
	\end{cases}
\]
In this paper, referring to the work of Iwasaki \cite{I2}, we introduce Zariski open subsets $\mathcal{M}^\circ(a) \subset \mathcal{M}(a)^{irr}$ and $\mathcal{S}^\circ(a)\subset \mathcal{S}(a)$ called big opens.
After that, we parametrize the monodromies in $\mathcal{M}^\circ(a)$ in terms of $\mathcal{S}^\circ(a)$
(Theorem \ref{Theorem_Parametrization}).
Using this parametrization, we show that the subset of $\mathcal{M}^\circ(a)$ consisting of unitary monodromies can be identified with $\mathcal{S}^\circ(a)\cap\mathbb{R}^{m}$.
Moreover, we determine the signatures of unitary monodromies in $\mathcal{M}^\circ(a)$ in terms of $\mathcal{S}^\circ(a)\cap\mathbb{R}^m$  (Theorem \ref{Theorem_main}).
%start modified
It is known that, when $n=3$, it holds that $\mathcal{S}^\circ(a)=\mathcal{S}(a)$ and $\mathcal{M}^\circ(a)=\mathcal{M}(a)$ for a generic $a\in\mathbb{C}^4$ (Remark \ref{Remark_n=3}).
Although these equalities are expected to hold for general $n\ge 4$, we have not obtained a proof at this time. 
On the other hand, the parametrization for monodromies belonging to the complement $\mathcal{M}(a)\setminus\mathcal{M}^\circ(a)$ is considered by the author \cite{Ada} ($n=3$ and non-generic $a\in\mathbb{C}^4$) and Calligaris-Mazzocco \cite{CM} ($n=4$).
Their expressions are some complicated and seem hard to generalize for general $n \ge 5$.
Hence, in this paper we choose to focus only on the open subsets $\mathcal{S}^\circ(a)$ and $\mathcal{M}^\circ(a)$.
%end modified

We remark that, a part of our result can be regarded as another proof of Morgan-Shalen \cite{MS} and Acosta \cite{Acosta}, which gave a criterion for monodromies in $\mathcal{M}(a)$ to be unitary in the study of character varieties.

\medskip

This paper is organized as follows.
We give a brief review of the theory of the monodromy of the Fuchsian system \eqref{Eq} and construct the moduli space $\mathcal{M}(a)^{irr}$ of irreducible monodromy representations in Section \ref{Section_Moduli}.
In Section \ref{Section_Unitary}, we shall explain the equivalence of the unitarity of monodromies and the existence of monodromy invariant Hermitian forms.
In Section \ref{Section_Parametrization}, we introduce the Zariski open subsets $\mathcal{M}^\circ(a)$ and $\mathcal{S}^\circ(a)$, and parametrize the monodromies in $\mathcal{M}^\circ(a)$ in terms of $\mathcal{S}^\circ(a)$.
In Section \ref{Section_characterization}, using the results in Sections \ref{Section_Unitary} and \ref{Section_Parametrization}, we study the unitarity of the monodromies in $\mathcal{M}^\circ(a)$.

\medskip

We end this introduction by noting that the correspondence between the obtained unitary monodromies and the accessory parameters in the system \eqref{Eq} -- Riemann-Hilbert correspondence -- is very transcendental.
It seems interesting to investigate what conditions the unitarity of the monodromy imposes on the accessory parameters of the differential equations.

%%%%%%%%%%%%%%%%
%%%%%%%%%%%%%%%%
\section{Moduli space of monodromy representations}
\label{Section_Moduli}
%%%%%%%%%%%%%%%%
%%%%%%%%%%%%%%%%

We shall give a brief review of the theory of the monodromy of the Fuchsian system \eqref{Eq}.
Take a base point $b\in \mathbb{P}^1\setminus S$ and 
$(+1)$-loops $\gamma_1,\gamma_2,\ldots,\gamma_{n+1}$ as illustrated in Figure \ref{Fig_loops}.
%%%%%%
\begin{figure}[h]
\tikzset{every picture/.style={line width=0.75pt}} %set default line width to 0.75pt        
\centering
\begin{tikzpicture}[x=0.75pt,y=0.75pt,yscale=-1,xscale=1]
%uncomment if require: \path (0,252); %set diagram left start at 0, and has height of 252

%Shape: Circle [id:dp5484160323216446] 
\draw  [fill={rgb, 255:red, 0; green, 0; blue, 0 }  ,fill opacity=1 ] (307.33,97.67) .. controls (307.33,96.19) and (308.53,95) .. (310,95) .. controls (311.47,95) and (312.67,96.19) .. (312.67,97.67) .. controls (312.67,99.14) and (311.47,100.33) .. (310,100.33) .. controls (308.53,100.33) and (307.33,99.14) .. (307.33,97.67) -- cycle ;
%Shape: Circle [id:dp16932904357436884] 
\draw  [fill={rgb, 255:red, 255; green, 255; blue, 255 }  ,fill opacity=1 ] (197.33,195.42) .. controls (197.33,193.94) and (198.53,192.75) .. (200,192.75) .. controls (201.47,192.75) and (202.67,193.94) .. (202.67,195.42) .. controls (202.67,196.89) and (201.47,198.08) .. (200,198.08) .. controls (198.53,198.08) and (197.33,196.89) .. (197.33,195.42) -- cycle ;
%Shape: Circle [id:dp7458827455109234] 
\draw  [fill={rgb, 255:red, 255; green, 255; blue, 255 }  ,fill opacity=1 ] (277.83,204.67) .. controls (277.83,203.19) and (279.03,202) .. (280.5,202) .. controls (281.97,202) and (283.17,203.19) .. (283.17,204.67) .. controls (283.17,206.14) and (281.97,207.33) .. (280.5,207.33) .. controls (279.03,207.33) and (277.83,206.14) .. (277.83,204.67) -- cycle ;
%Shape: Circle [id:dp6797365000465387] 
\draw  [fill={rgb, 255:red, 255; green, 255; blue, 255 }  ,fill opacity=1 ] (412.33,186.42) .. controls (412.33,184.94) and (413.53,183.75) .. (415,183.75) .. controls (416.47,183.75) and (417.67,184.94) .. (417.67,186.42) .. controls (417.67,187.89) and (416.47,189.08) .. (415,189.08) .. controls (413.53,189.08) and (412.33,187.89) .. (412.33,186.42) -- cycle ;
%Shape: Circle [id:dp10812379678832285] 
\draw  [fill={rgb, 255:red, 255; green, 255; blue, 255 }  ,fill opacity=1 ] (377.67,40.33) .. controls (377.67,38.86) and (378.86,37.67) .. (380.33,37.67) .. controls (381.81,37.67) and (383,38.86) .. (383,40.33) .. controls (383,41.81) and (381.81,43) .. (380.33,43) .. controls (378.86,43) and (377.67,41.81) .. (377.67,40.33) -- cycle ;
%Curve Lines [id:da5875513522931497] 
\draw    (310,97.67) .. controls (266.55,151.58) and (210.67,220.33) .. (186.88,214.25) .. controls (163.08,208.17) and (213.75,108.5) .. (310,97.67) -- cycle ;
%Curve Lines [id:da8639750563288829] 
\draw    (310,97.67) .. controls (314.25,102.5) and (306.67,243.67) .. (274.67,225.67) .. controls (242.67,207.67) and (299.63,114) .. (310,97.67) -- cycle ;
%Curve Lines [id:da21409401699777164] 
\draw    (310.33,97.67) .. controls (312.33,100) and (478.67,166.83) .. (439.67,204.33) .. controls (400.67,241.83) and (310.58,94.42) .. (310.33,97.67) -- cycle ;
%Curve Lines [id:da8630698840746049] 
\draw    (310,97.67) .. controls (429.75,81) and (445.75,46.5) .. (402.75,11.5) .. controls (359.75,-23.5) and (327,85.17) .. (310,97.67) -- cycle ;
\draw  [fill={rgb, 255:red, 0; green, 0; blue, 0 }  ,fill opacity=1 ] (179.85,186.2) -- (181.25,198.32) -- (189.72,189.54) -- (183.02,193.1) -- cycle ;
\draw  [fill={rgb, 255:red, 0; green, 0; blue, 0 }  ,fill opacity=1 ] (259.24,204.11) -- (265.8,214.4) -- (269.58,202.81) -- (265.1,208.93) -- cycle ;
\draw  [fill={rgb, 255:red, 0; green, 0; blue, 0 }  ,fill opacity=1 ] (389.25,200.17) -- (401.24,202.41) -- (395.36,191.73) -- (396.77,199.18) -- cycle ;
\draw  [fill={rgb, 255:red, 0; green, 0; blue, 0 }  ,fill opacity=1 ] (419.69,20.64) -- (408.25,16.41) -- (412.24,27.93) -- (412.1,20.35) -- cycle ;
%Shape: Circle [id:dp10660813041049666] 
\draw  [fill={rgb, 255:red, 0; green, 0; blue, 0 }  ,fill opacity=1 ] (313.42,209.38) .. controls (313.42,208.89) and (313.81,208.5) .. (314.29,208.5) .. controls (314.77,208.5) and (315.17,208.89) .. (315.17,209.38) .. controls (315.17,209.86) and (314.77,210.25) .. (314.29,210.25) .. controls (313.81,210.25) and (313.42,209.86) .. (313.42,209.38) -- cycle ;
%Shape: Circle [id:dp3092539897097275] 
\draw  [fill={rgb, 255:red, 0; green, 0; blue, 0 }  ,fill opacity=1 ] (332.42,210.13) .. controls (332.42,209.64) and (332.81,209.25) .. (333.29,209.25) .. controls (333.77,209.25) and (334.17,209.64) .. (334.17,210.13) .. controls (334.17,210.61) and (333.77,211) .. (333.29,211) .. controls (332.81,211) and (332.42,210.61) .. (332.42,210.13) -- cycle ;
%Shape: Circle [id:dp6693303409019935] 
\draw  [fill={rgb, 255:red, 0; green, 0; blue, 0 }  ,fill opacity=1 ] (352.17,207.88) .. controls (352.17,207.39) and (352.56,207) .. (353.04,207) .. controls (353.52,207) and (353.92,207.39) .. (353.92,207.88) .. controls (353.92,208.36) and (353.52,208.75) .. (353.04,208.75) .. controls (352.56,208.75) and (352.17,208.36) .. (352.17,207.88) -- cycle ;
%Shape: Circle [id:dp28833274837809664] 
\draw  [fill={rgb, 255:red, 0; green, 0; blue, 0 }  ,fill opacity=1 ] (370.67,203.13) .. controls (370.67,202.64) and (371.06,202.25) .. (371.54,202.25) .. controls (372.02,202.25) and (372.42,202.64) .. (372.42,203.13) .. controls (372.42,203.61) and (372.02,204) .. (371.54,204) .. controls (371.06,204) and (370.67,203.61) .. (370.67,203.13) -- cycle ;

% Text Node
\draw (199.5,171.73) node [anchor=north west][inner sep=0.75pt]    {$t_{1}$};
% Text Node
\draw (276.5,181.32) node [anchor=north west][inner sep=0.75pt]    {$t_{2}$};
% Text Node
\draw (402.17,166.07) node [anchor=north west][inner sep=0.75pt]    {$t_{n}$};
% Text Node
\draw (345,48.57) node [anchor=north west][inner sep=0.75pt]    {$t_{n+1} =\infty $};
% Text Node
\draw (302.33,75.4) node [anchor=north west][inner sep=0.75pt]    {$b$};
% Text Node
\draw (181.67,220.82) node [anchor=north west][inner sep=0.75pt]    {$\gamma _{1}$};
% Text Node
\draw (270.83,231.48) node [anchor=north west][inner sep=0.75pt]    {$\gamma _{2}$};
% Text Node
\draw (430.42,214.9) node [anchor=north west][inner sep=0.75pt]    {$\gamma _{n}$};
% Text Node
\draw (431.5,26.9) node [anchor=north west][inner sep=0.75pt]    {$\gamma _{n+1}$};
\end{tikzpicture}
\caption{$(+1)$-loops}
\label{Fig_loops}
\end{figure}
%%%%%%%%%%%%%%
Then, the fundamental group $\pi_1(\mathbb{P}^1 \setminus S,b)$ has the presentation
\[
 \pi_1(\mathbb{P}^1 \setminus S,b)=\langle \gamma_1,\gamma_2,\ldots,\gamma_{n+1}\, \vert\,
 \gamma_1\gamma_2\cdots \gamma_{n+1}=1 \rangle,
\]
where we identify the $(+1)$-loop with its equivalence class.
We take a fundamental solution matrix $\mathcal{U}(x)$ of \eqref{Eq} in a neighborhood of $x=b$ and let $\gamma_*\,\mathcal{U}(x)$ be the analytic continuation of $\mathcal{U}(x)$ along a loop $\gamma \in \pi_1(\mathbb{P}^1 \setminus S,b)$.
Then there exists a matrix $M_j \in \mathrm{SL}(2,\mathbb{C})$ such that
\[
(\gamma_{j})_*\,\mathcal{U}(x)=\mathcal{U}(x) M_j.
\]
We note that 
\[
M_j \sim \begin{pmatrix} 
 e(\theta_j) & \\
  & e(-\theta_j)
  \end{pmatrix}
\]
holds from the assumption \eqref{res_diag}.
By setting $\rho(\gamma_j)=M_j$, we obtain the (anti-)representation 
\[
\rho:\pi(\mathbb{P}^1 \setminus S,b)\,\to\,\mathrm{SL}(2,\mathbb{C}).
\]
We call this representation the monodromy representation of $\mathcal{U}(x)$.
The image of the monodromy representation is called the monodromy group of $\mathcal{U}(x)$.
Here we note that, to determine the monodromy representation, it is sufficient to fix the image $M_j$ of each generator $\gamma_j$ of $\pi_1(\mathbb{P}^1\setminus S,b)$.
The relation $\gamma_1\gamma_2\cdots\gamma_{n+1}=1$ leads to 
\begin{equation}
M_{n+1}M_n\cdots M_1=I. \label{Mn_prod}
\end{equation}
Thanks to this relation, the matrix $M_{n+1}$ is determined by $M_1,\ldots,M_n$.
Hence, we sometimes identify the representation $\rho$ with the tuple of matrices $(M_1,\ldots,M_{n})$, and write $\rho=(M_1,\ldots,M_n)$.

The monodromy representation $\rho=(M_1,\ldots,M_n)$ depends on the fundamental solution matrix $\mathcal{U}(x)$.
If we consider the transformation of the fundamental solution matrix 
\[
\mathcal{U}(x) \mapsto \mathcal{V}(x)P 
\]
by some $P \in \mathrm{SL}(2,\mathbb{C})$, then the monodromy representation of $\mathcal{V}(x)$ is given by
\[
P\rho P^{-1}=(PM_1 P^{-1},PM_2P^{-1},\ldots,PM_nP^{-1}).
\]
This means that a change of the fundamental solution induces the isomorphism of monodromy representations.
We call the isomorphism class 
\[
[\rho]:=\{P\rho P^{-1}\,;\, P\in\mathrm{SL}(2,\mathbb{C})\}
\]
the monodromy of \eqref{Eq}. 
A monodromy representation $\rho=(M_1,\ldots,M_n)$ or an isomorphism class $[\rho]=[(M_1,\ldots,M_{n})]$ is called irreducible if there is no common invariant subspace of $(M_1,\ldots,M_n)$ except $\{0\}$ and $\mathbb{C}^2$.
The Riemann-Hilbert correspondence guarantees that, 
for any given irreducible monodromy $[\rho]$, there exists a Fuchsian system \eqref{Eq} whose monodromy is $[\rho]$.
%%%%%%%

Let us define the moduli space of monodromy representations of Fuchsian systems \eqref{Eq}.
We set $a=(a_1,a_2,\ldots,a_{n+1}) \in \mathbb{C}^{n+1}$ by
\begin{equation}\label{Eq_a_j}
 a_j=\tr e^{2\pi \sqrt{-1}A_j}=e(\theta_j)+e(-\theta_j)=2\cos \pi \theta_j.
\end{equation}
Since we assumed $\theta_j \notin \mathbb{Z}$ in \eqref{res_diag}, we have
\begin{equation}\label{Tr_ass}
a_j \neq \pm 2 \quad (1\le j \le n+1).
\end{equation}
Next we set
\[
 \mathcal{O}(a_j)=\{M \in \mathrm{SL}(2,\mathbb{C})\,;\, \tr (M)=a_j\}
\]
for $j=1,2,\ldots,n+1$.
Thanks to \eqref{Tr_ass}, any matrix in $\mathcal{O}(a_j)$ has distinct eigenvalues $\{e(\theta_j), e(-\theta_j)\}$ and hence it is diagonalizable.
In other words, the set $\mathcal{O}(a_j)$ can be regarded as the conjugacy class of $e^{2\pi \sqrt{-1}A_j}$, namely,
\[
\mathcal{O}(a_j)=[e^{2\pi \sqrt{-1}A_j}]=\{Pe^{2\pi \sqrt{-1}A_j}P^{-1}\,;\,P \in \mathrm{SL}(2,\mathbb{C})\}.
\]
Therefore, the set $\mathcal{O}(a_j)$ is nothing but the local monodromy at $x=t_j$ under the condition \eqref{Tr_ass}.
%representation variety
Now, we define the moduli space by
\[
\mathcal{M}(a):=\{(M_1,\ldots,M_n) \in \mathcal{O}(a_1)\times 
  \cdots \times \mathcal{O}(a_n)
 \,;\,  M_{n+1}\in \mathcal{O}(a_{n+1})\}/\sim,
\]
where the equivalence relation $\rho' \sim \rho$ means there exists
$P \in \mathrm{SL}(2,\mathbb{C})$ such that $\rho'=P\rho P^{-1}$ holds.
We denote by $\mathcal{M}(a)^{irr}$ the subset of $\mathcal{M}(a)$ which consists of irreducible elements.
It is known that the moduli space $\mathcal{M}(a)^{irr}$ is a $2(n-2)$ dimensional algebraic variety.
Especially, if $n=2$, the space $\mathcal{M}(a)^{irr}$ consists of a point.
This means that the equation \eqref{Eq} is rigid when $n=2$.
%Before that, we introduce the notion of unitary monodromies, which is a main theme of this paper.
\begin{remark}
The condition \eqref{Tr_ass} comes from the assumption \eqref{res_diag} for the differential equation.
On the other hand, the set $\mathcal{O}(a_j)$ and the moduli space $\mathcal{M}(a)$ themselves can be defined without assuming \eqref{Tr_ass}.
Furthermore, all the results in the following sections hold without assuming \eqref{Tr_ass}.
\end{remark}

%%%%%%%%%%%%%%%%
%%%%%%%%%%%%%%%%
\section{Unitary monodromies and invariant Hermitian forms}
\label{Section_Unitary}
%%%%%%%%%%%%%%%%
%%%%%%%%%%%%%%%%

Let $p,q$ be non-negative integers satisfying $p+q=2$ and 
$I_{p,q}=I_p\oplus (-I_q)$.
Here $I_n$ denotes the identity matrix of size $n$.
Then the special unitary group $\mathrm{SU}(p,q)$ with the signature $(p,q)$ is defined by
\[
\mathrm{SU}(p,q)=\{ U \in \mathrm{SL}(2,\mathbb{C})\,;\,
\bar{U}^T I_{p,q} U=I_{p,q}\},
\]
where $\bar{U}^T$ denotes the complex conjugate of the transpose of $U$.
We sometimes write $\mathrm{SU}(2,0)=\mathrm{SU}(0,2)=\mathrm{SU}(2)$.
%%%%
\begin{definition}
A monodromy $[\rho]\in\mathcal{M}(a)^{irr}$ is called \textit{unitary}
(of the signature $(p,q)$)
 if there is a representative 
$\rho=(M_1,\ldots,M_n)$ 
such that the monodromy group 
$\mathrm{Im\,}\rho=\langle M_1,\ldots,M_n\rangle$
is a subgroup of the special unitary group $\mathrm{SU}(p,q)$.
\end{definition}
%%%%
%%%%
The unitarity of the monodromy $[\rho]$ is equivalent to the existence of an invariant non-degenerate Hermitian form.
%Invariant Hermitian forms
%%%%
\begin{proposition}\label{Proposition_rep_Herm}
The monodromy $[\rho]\in\mathcal{M}(a)^{irr}$ is unitary if and only if there is a representative $\rho=(M_1,\ldots,M_n)$ of $[\rho]$ which has an invariant non-degenerate Hermitian form $h$,
namely, there is a non-degenerate Hermitian matrix $H$ satisfying
\begin{equation}\label{eq_invHerm}
   \bar{M}_j^T HM_j=H \quad (j=1,2,\ldots,n).
 \end{equation}
If such an invariant Hermitian matrix $H$ is definite (resp. indefinite), then the signature of $[\rho]$ is $(2,0)$ or $(0,2)$ (resp. $(1,1)$).
\end{proposition}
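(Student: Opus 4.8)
The plan is to recast both conditions as a single statement about Hermitian congruence of $2\times 2$ matrices: conjugating the representation into $\mathrm{SU}(p,q)$ is the same as bringing an invariant form $H$ into the normal form $I_{p,q}$. With this in mind the ``only if'' direction is immediate. If $[\rho]$ is unitary of signature $(p,q)$, choose a representative $\rho=(M_1,\dots,M_n)$ with every $M_j\in\mathrm{SU}(p,q)$; then $H:=I_{p,q}$ is a non-degenerate Hermitian matrix, and its defining relation $\bar{M}_j^T I_{p,q} M_j=I_{p,q}$ is precisely \eqref{eq_invHerm}, so $\rho$ admits an invariant non-degenerate Hermitian form.

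For the ``if'' direction, suppose $\rho=(M_1,\dots,M_n)$ satisfies \eqref{eq_invHerm} for a non-degenerate Hermitian $H$ of signature $(p,q)$. I would first normalize the size of $H$: since \eqref{eq_invHerm} is unchanged under replacing $H$ by a positive real multiple, and such a multiple is again Hermitian of the same signature, I set $H_0:=|\det H|^{-1/2}H$, so that $\det H_0=\pm1$. By Sylvester's law of inertia there is $Q\in\GL(2,\mathbb{C})$ with $\bar{Q}^T H_0 Q=I_{p,q}$. After correcting $Q$ to lie in $\mathrm{SL}(2,\mathbb{C})$ (see below), a direct computation that substitutes $I_{p,q}=\bar{Q}^T H_0 Q$ and uses \eqref{eq_invHerm} for $H_0$ gives $\overline{Q^{-1}M_jQ}^T I_{p,q}(Q^{-1}M_jQ)=\bar{Q}^T H_0 Q=I_{p,q}$, so that $Q^{-1}M_jQ\in\mathrm{SU}(p,q)$ for all $j$; thus $Q^{-1}\rho Q$ exhibits $[\rho]$ as unitary of signature $(p,q)$.

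The main obstacle is exactly this determinant correction, since Sylvester's theorem supplies only a congruence in $\GL(2,\mathbb{C})$, whereas the equivalence defining $\mathcal{M}(a)$ is by $\mathrm{SL}(2,\mathbb{C})$. Taking determinants in $\bar{Q}^T H_0 Q=I_{p,q}$ yields $|\det Q|^2\det H_0=\det I_{p,q}$; the key point is that, when $(p,q)$ is the signature of $H_0$, the signs of $\det H_0$ and $\det I_{p,q}$ agree (both positive in the definite case, both negative in the indefinite case) while $|\det H_0|=1$, so $|\det Q|=1$. Writing $\det Q=e^{\sqrt{-1}\phi}$ and replacing $Q$ by $e^{-\sqrt{-1}\phi/2}Q$ leaves $\bar{Q}^T H_0 Q=I_{p,q}$ intact (the scalar has modulus one) while forcing $\det Q=1$, which supplies the required element of $\mathrm{SL}(2,\mathbb{C})$.

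Finally, the construction produces signature $(p,q)$ equal to the signature of $H$, so a definite $H$ yields $(2,0)$ or $(0,2)$ -- both equal to $\mathrm{SU}(2)$ -- and an indefinite $H$ yields $(1,1)$, as claimed. To see that the signature is an invariant of $[\rho]$ and not an artifact of the chosen form, I would use irreducibility: rewriting \eqref{eq_invHerm} as $HM_jH^{-1}=(\bar{M}_j^T)^{-1}$ shows the right-hand side does not involve $H$, so any two invariant non-degenerate Hermitian forms $H_1,H_2$ satisfy $H_1M_jH_1^{-1}=H_2M_jH_2^{-1}$ for every $j$; hence $H_2^{-1}H_1$ commutes with all $M_j$, and Schur's lemma together with the irreducibility of $[\rho]$ forces $H_2^{-1}H_1=\lambda I$ with $\lambda\in\mathbb{R}$ (the Hermitian condition making $\lambda$ real). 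Thus the invariant form is unique up to a nonzero real scalar, so its definiteness -- and hence the signature -- is well defined.
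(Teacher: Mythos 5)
Your proof is correct and follows essentially the same route as the paper: the ``only if'' direction via $H=I_{p,q}$, and the ``if'' direction by normalizing $|\det H|=1$ and producing a congruence $\bar{Q}^T H Q = I_{p,q}$ with $Q\in\mathrm{SL}(2,\mathbb{C})$ (the paper builds $Q$ explicitly from a unitary diagonalization plus a diagonal rescaling, while you invoke Sylvester's law and then fix the determinant with a unimodular scalar -- the same idea). Your closing Schur's-lemma argument for uniqueness of the invariant form up to a real scalar is a nice addition that the paper only touches on in Remark \ref{Rem_invHerm}.
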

%%%%
\begin{proof}
Assume that the monodromy $[\rho]$ is unitary of the signature $(p,q)$.
Then, there exists a representative $\rho=(M_1,\ldots,M_n)$ such that
\[
\mathrm{Im}\,\rho=\langle M_1,\ldots,M_n\rangle \subset \mathrm{SU}(p,q).
\]
By setting $H=I_{p,q}$, we have \eqref{eq_invHerm}.

Conversely, we assume that the representative $\rho=(M_1,\ldots,M_n)$ of $[\rho]$ has an invariant non-degenerate Hermitian matrix $H$ satisfying \eqref{eq_invHerm}.
Multiplying the both side of \eqref{eq_invHerm} by a suitable positive real number, we can suppose that $|\det H|=1$.
Here we recall an elementary fact: any Hermitian matrix can be diagonalized by a unitary matrix, and the resulting diagonal matrix has only real entries.
Hence we can take a unitary matrix $U\in\mathrm{SU}(2)$ such that
\[
U^{-1}HU=\bar{U}^THU=\begin{pmatrix}
h_1 & \\
 & h_2
\end{pmatrix},
\]
where $h_1,h_2 \in\mathbb{R}\setminus\{0\}$ are the eigenvalues of $H$.
Note that $|h_1 h_2|=1$.
Without loss of generality, we assume $h_1 \ge h_2$.
Next we set
\[
Q=\begin{pmatrix}
\sqrt{|h_1|} & \\
 & \sqrt{|h_2|}
\end{pmatrix}
\]
and $P=QU^{-1}$. 
Thanks to $|h_1h_2|=1$, we have $P \in \mathrm{SL}(2,\mathbb{C})$.
Then $\rho_1:=P\rho P^{-1}=(PM_1P^{-1},\ldots,PM_nP^{-1})$ 
is also a representative of $[\rho]$ and keep the Hermitian matrix $(\bar{P}^T)^{-1} H P^{-1}$ invariant. 
Here we have
\[
(\bar{P}^T)^{-1} H P^{-1}=(\bar{Q}^T)^{-1} \bar{U}^THUQ^{-1}
 =\begin{pmatrix}
 \dfrac{h_1}{|h_1|} & \\
 & \dfrac{h_2}{|h_2|}
 \end{pmatrix}=I_{p,q}.
\]
This means that $\mathrm{Im}\,\rho_1$ is a subgroup of $\mathrm{SU}(p,q)$, that is, the monodromy $[\rho]$ is unitary.
 If $H$ is definite, namely $h_1h_2>0$, then we have $(p,q)=(2,0)$ or $(0,2)$.
On the other hand, if $H$ is indefinite, namely $h_1h_2<0$, then we have $(p,q)=(1,1)$.
\end{proof}
%%%
%%%%%
\begin{remark}\label{Rem_invHerm}
If a representative $\rho_1\in[\rho]$
has an invariant non-degenerate Hermitian form $H_1$, then any other representative
$\rho_2=P\rho_1 P^{-1}\in[\rho]$ 
also has an invariant non-degenerate Hermitian form $H_2=(\bar{P}^{-1})^TH_1P^{-1}$.
Moreover, the signatures of $H_1$ and $H_2$ are the same.
As a consequence, 
we see that if a monodromy $[\rho]$ is unitary, then in fact any representative has an invariant non-degenerate Hermitian form.
\end{remark}
%%%%%
%Therefore, Proposition \ref{Proposition_rep_Herm} can be rephrased as follows.
Proposition \ref{Proposition_rep_Herm} will play an important role for the characterization of unitary monodromies in Section \ref{Section_characterization}.

%%%%%%%%%%%%%%%%
%%%%%%%%%%%%%%%%
\section{Parametrization theorem}
\label{Section_Parametrization}
%%%%%%%%%%%%%%%%
%%%%%%%%%%%%%%%%

Hereafter, we assume $n \ge 3$ for simplicity.
But we remark that the case $n=2$ can be considered in the same idea.
In this section, we realize the moduli space $\mathcal{M}(a)^{irr}$ as an affine algebraic set, and parametrize the elements in $\mathcal{M}(a)^{irr}$ in terms of this affine algebraic set.

For each $[\rho]=[(M_1,\ldots,M_n)]\in\mathcal{M}(a)^{irr}$,
we set
 \begin{equation}\label{tr_cood}
 x_{ji}=\tr (M_jM_i), \quad x_{kji}=\tr(M_kM_jM_i), \quad \{i,j,k\}\subset\{1,2,\ldots,n\}.
 \end{equation}
Here we note that the values of $x_{ji}$ and $x_{kji}$ do not depend on the choice of the representative of $[\rho]$,
and satisfy some linear relations as follows.
\begin{lemma}\label{Lemma_tr}
For any $\{i,j,k\} \subset \{1,2,\ldots,n\}$, it hold that
	\begin{enumerate}\rm
	\item $x_{ji}=x_{ij}$,
	\item $x_{kji}=x_{jik}=x_{ikj}$,
	\item $x_{kji}=a_k x_{ji}+a_j x_{ki}+a_i x_{kj}-a_k a_j a_i -x_{kij}$.
	\end{enumerate}
\end{lemma}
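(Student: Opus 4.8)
The plan is to reduce everything to the cyclic invariance of the trace together with the two elementary trace identities available in $\mathrm{SL}(2,\mathbb{C})$. Parts (i) and (ii) are immediate: since $\tr(XY)=\tr(YX)$ for any matrices, we have $x_{ji}=\tr(M_jM_i)=\tr(M_iM_j)=x_{ij}$, which is (i). For (ii), observe that $(k,j,i)\mapsto(j,i,k)\mapsto(i,k,j)$ are the three cyclic rotations of the word $M_kM_jM_i$, so $x_{kji}=\tr(M_kM_jM_i)=\tr(M_jM_iM_k)=\tr(M_iM_kM_j)=x_{jik}=x_{ikj}$ again by cyclic invariance.

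For part (iii) the key point is that $x_{kij}=\tr(M_kM_iM_j)$ is \emph{not} a cyclic rotation of $x_{kji}=\tr(M_kM_jM_i)$, so the identity genuinely records the symmetric combination of the two "chiralities". First I would record the fundamental input: for $M\in\mathrm{SL}(2,\mathbb{C})$ the Cayley--Hamilton theorem gives $M^2-(\tr M)M+I=O$, hence $M^{-1}=(\tr M)\,I-M$. Taking traces of $AM^{-1}$ with $A$ arbitrary yields the two-variable identity
\begin{equation}\label{eq_star}
\tr(AB^{-1})=\tr(A)\tr(B)-\tr(AB),\qquad A,B\in\mathrm{SL}(2,\mathbb{C}).
\end{equation}

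From \eqref{eq_star} I would derive the symmetric three-variable identity
\[
\tr(ABC)+\tr(ACB)=\tr(A)\tr(BC)+\tr(B)\tr(AC)+\tr(C)\tr(AB)-\tr(A)\tr(B)\tr(C).
\]
The derivation proceeds by writing $\tr(ABC)=\tr(A)\tr(BC)-\tr(AC^{-1}B^{-1})$ via \eqref{eq_star} applied with the pair $(A,BC)$, then expanding $C^{-1}=(\tr C)I-C$ and $B^{-1}=(\tr B)I-B$ and re-applying \eqref{eq_star} to the resulting terms; collecting everything isolates $\tr(ACB)$ and produces the displayed relation. Specializing to $A=M_k$, $B=M_j$, $C=M_i$ and translating into the notation of \eqref{tr_cood} gives $x_{kji}+x_{kij}=a_kx_{ji}+a_jx_{ki}+a_ix_{kj}-a_ka_ja_i$, which rearranges to (iii).

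The only real work lies in the bookkeeping for the second identity: one must keep careful track of which products are cyclic rotations (and hence identified) and which are not, so that the $\tr(ACB)$ term survives rather than cancelling back into $\tr(ABC)$. I expect no conceptual obstacle beyond this careful expansion, since both ingredients—Cayley--Hamilton in the form $M^{-1}=(\tr M)I-M$ and the cyclicity of the trace—are completely elementary in rank two.
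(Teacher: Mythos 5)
Your proof is correct and follows essentially the same route as the paper: parts (i) and (ii) by cyclic invariance of the trace, and part (iii) by iterating the skein relation $\tr(AB)+\tr(AB^{-1})=\tr(A)\tr(B)$ (which you state in the equivalent rearranged form) together with $B^{-1}=(\tr B)I-B$ to reach the Fricke-type identity $\tr(ABC)+\tr(ACB)=\tr(A)\tr(BC)+\tr(B)\tr(AC)+\tr(C)\tr(AB)-\tr(A)\tr(B)\tr(C)$. You simply spell out the computation that the paper leaves implicit.
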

%%%%
\begin{proof}
The statements (i) and (ii) are clear from the definition \eqref{tr_cood}.
The last statement (iii) can be shown by the iterations of the skein relation
\[
\tr (AB)+\tr(AB^{-1})=\tr (A) \tr (B) \quad (A,B \in\mathrm{SL}(2,\mathbb{C}))
\]
for $x_{kji}=\tr(M_kM_jM_i)$.
\end{proof}
%%%%
Thanks to this lemma, to obtain the full set of \eqref{tr_cood}, it is sufficient to give only 
 $x_{ji}$ for $1\le i<j \le n$ and 
 $x_{kji}$ for $1\le i<j<k\le n$.
Using them, we set the coordinates $x \in \mathbb{C}^m$ as 
\begin{equation}\label{tr_cood_2}
 x=
\begin{cases}
 (x_{21},x_{31},x_{32}) & n=3, \\
 (x_{21},x_{31},x_{32},\ldots,x_{n,n-1},x_{321},\ldots,x_{n,n-1,n-2}) & n\ge4,
 \end{cases}
\end{equation} 
where 
\[
m=\begin{cases} 
	3 & n=3, \\
	\binom{n}{3}+\binom{n}{2} & n\ge 4.
	\end{cases}
\]
Moreover, there exist non-trivial relations between the entries of $x$.

\begin{proposition}[cf. Ashley-Burelle-Lawton \cite{ABL}, Theorem 3.1]\label{Prop_invariants}
Let $x \in \mathbb{C}^m$ be the coordinates \eqref{tr_cood_2} of $[\rho]\in\mathcal{M}(a)^{irr}$.
Then there exist polynomials 
$ f_1(x),f_2(x),\ldots,f_{r}(x) \in \mathbb{C}[x]$ such that
\[
f_1(x)=f_2(x)=\cdots=f_{r}(x)=0,
\]
where 
\[ 
 r=
\begin{cases}
1 & n=3, \\[5pt]
\dfrac{1}{2}\left(\binom{n}{3}^2+\binom{n}{3}\right)+n\binom{n}{4}+1 & n\ge 4.
\end{cases}
\]
The polynomials $f_1(x),f_2(x),\ldots,f_{r}(x)$ are described as follows:
for $1\le i_1<i_2<i_3\le n$, we set 
\[
\begin{aligned}
s_3(i_1,i_2,i_3) =
a_{i_1}x_{i_3i_2}+a_{i_2}x_{i_3i_1}+a_{i_3}x_{i_2i_1}-a_{i_3}a_{i_2}a_{i_1}-2 x_{i_3 i_2 i_1}.
\end{aligned}
\]

 {\bf Type 1 relations:} when $n \ge 3$, the $\frac{1}{2}\left(\binom{n}{3}^2+\binom{n}{3}\right)$ polynomials $f_1(x),\ldots,f_{\frac{1}{2}\left(\binom{n}{3}^2+\binom{n}{3}\right)}(x)$ are defined by
 \[
 s_3(i_1,i_2,i_3)s_3(j_1,j_2,j_3)+2\det Z,
 \]
 where $1\le i_1<i_2<i_3\le n$, $1\le j_1<j_2<j_3\le n$ and 
 \[
 Z=\begin{pmatrix}
 z_{i_1,j_1} & z_{i_1,j_2} & z_{i_1,j_3} \\
 z_{i_2,j_1} & z_{i_2,j_2} & z_{i_2,j_3} \\
 z_{i_3,j_1} & z_{i_3,j_2} & z_{i_3,j_3}
 \end{pmatrix},
 \quad 
 z_{i,j}=\begin{cases} 
  \dfrac{1}{2}a_{i}^2-2 & i=j \\
 x_{ij}-\dfrac{1}{2}a_i a_j & i\neq j.
 \end{cases}
 \]
 
\medskip
 
  {\bf Type 2 relations:} when $n \ge 4$, the $n\binom{n}{4}$ polynomials $f_{\frac{1}{2}\left(\binom{n}{3}^2+\binom{n}{3}\right)+1}(x),\ldots,f_{r-1}(x)$ are defined by
  \[
   \sum_{k=0}^3(-1)^k z_{i,p_k} s_3(p_0,\ldots,\widehat{p_k},\ldots,p_3),
  \]
  where $1\le i \le n$, $1\le p_0<p_1<p_2<p_3\le n$ and $\widehat{p_k}$ means omission.

\medskip

  {\bf Type 3 relation:} when $n \ge 4$, the polynomial $f_r(x)$ is defined as follows. 
  Firstly, we define the polynomials
  \[
  g_{i_k,i_{k-1},\ldots,i_1}(x) \in \mathbb{C}[x] \quad (1\le i_1<\cdots<i_k\le n, ~ 1\le k\le n)
  \]
 recursively: for each $k=1,2,3$ we set
  \[
  \begin{aligned}
  g_{i_1}(x)=a_{i_1} \quad &(1\le i_1\le n), \\
  g_{i_2,i_1}(x)=x_{i_2i_1} \quad &(1\le i_1<i_2\le n),\\
  g_{i_3,i_2,i_1}(x)=x_{i_3i_2i_1} \quad &(1\le i_1<i_2<i_3\le n).
  \end{aligned}
  \]
 Using them, we define $g_{i_k,i_{k-1},\ldots,i_1}(x)$ for $k\ge4$ by the following recurrence relation
  \begin{equation}\label{Poly_g}
  \begin{aligned}
  g_{i_k,i_{k-1},\ldots,i_1}(x)=&\frac{1}{2} \{g_{i_k,\ldots,i_4}(x)a_{i_3}a_{i_2}a_{i_1}+g_{i_k,\ldots,i_4}(x)
  x_{i_3i_2i_1}
  +\sum_{p=1}^{3} a_{i_p}g_{i_k,\ldots,\widehat{i_p},\ldots,i_1}(x) \\
  &+g_{i_k,\ldots,i_3}(x)x_{i_2i_1}-g_{i_k,\ldots,i_4,i_2}(x)x_{i_3i_1}+g_{i_k,\ldots,i_4,i_1}(x)x_{i_3i_2}  \\
 & -g_{i_k,\ldots,i_4}(x)a_{i_3}x_{i_2i_1}-g_{i_k,\ldots,i_4}(x)a_{i_1}x_{i_3i_2} 
  -g_{i_k,\ldots,i_4,i_1}(x)a_{i_3}a_{i_2}-g_{i_k,\ldots,i_3}(x)a_{i_2}a_{i_1}\},
    \end{aligned}
  \end{equation}
 where $\widehat{i_p}$ means omission. 
 Then, the desired polynomial $f_r(x)$ is defined by
 \[
 f_{r}(x)=g_{n,n-1,\ldots,1}(x)-a_{n+1}.
 \]
\end{proposition}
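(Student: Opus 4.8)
The plan is to reduce all three families of relations to elementary linear algebra in $\mathfrak{sl}(2,\mathbb{C})\cong\mathbb{C}^3$. Write each $M_i=\tfrac12 a_i I+A_i^0$ with $A_i^0\in\mathfrak{sl}(2,\mathbb{C})$ its traceless part. A direct computation (using the skein relation exactly as in Lemma~\ref{Lemma_tr}) yields the two dictionary entries
\[
z_{i,j}=\tr(A_i^0 A_j^0),\qquad s_3(i_1,i_2,i_3)=x_{i_3i_1i_2}-x_{i_3i_2i_1}=\tr\!\big(A_{i_3}^0[A_{i_1}^0,A_{i_2}^0]\big),
\]
so that $z_{i,j}$ is the value of the (nondegenerate, symmetric) trace form and $s_3$ is the associated totally antisymmetric triple product. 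Fixing a basis of $\mathfrak{sl}(2,\mathbb{C})$, let $\vec v_i$ be the coordinate vector of $A_i^0$ and let $G$ be the Gram matrix of the trace form; one checks that $\det G=-2$ and that $s_3(i_1,i_2,i_3)=2\det[\vec v_{i_1}\,|\,\vec v_{i_2}\,|\,\vec v_{i_3}]$. Every quantity here is representative-independent, and the identities produced below will hold for \emph{arbitrary} tuples in $\mathrm{SL}(2,\mathbb{C})^n$, so irreducibility plays no role in the vanishing.

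For the Type~1 relations, write $Z=V_I^{\mathsf T}G\,V_J$ where $V_I=[\vec v_{i_1}|\vec v_{i_2}|\vec v_{i_3}]$ and $V_J$ is the analogous matrix for $J$. Then $\det Z=\det V_I\,\det G\,\det V_J=-2\det V_I\det V_J$, whereas $s_3(i_1,i_2,i_3)\,s_3(j_1,j_2,j_3)=4\det V_I\det V_J$; adding gives $s_3(I)s_3(J)+2\det Z=0$. For the Type~2 relations, observe that, up to the overall factor $2$, the polynomial is the Laplace expansion along the first row of the $4\times4$ determinant whose first row is $(z_{i,p_0},\dots,z_{i,p_3})$ and whose lower $3\times4$ block is $[\vec v_{p_0}|\cdots|\vec v_{p_3}]$. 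This matrix factors as a $4\times3$ times a $3\times4$ matrix, hence has rank at most $3$ and vanishing determinant (equivalently, four vectors in $\mathbb{C}^3$ are linearly dependent), and expanding recovers exactly $\sum_k(-1)^k z_{i,p_k}s_3(p_0,\dots,\widehat{p_k},\dots,p_3)=0$.

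The Type~3 relation is the substantial one. Set $\tilde g_{i_k,\dots,i_1}:=\tr(M_{i_k}\cdots M_{i_1})$. Its values for $k\le 3$ coincide with the defining values of $g$, so it suffices to show that the word traces $\tilde g$ satisfy the recurrence \eqref{Poly_g}; since the recurrence with prescribed initial data has a unique solution, this forces $g=\tilde g$. To establish \eqref{Poly_g} I would reduce $\tr\big((M_{i_k}\cdots M_{i_4})\,M_{i_3}M_{i_2}M_{i_1}\big)$ by repeated use of the polarized Cayley--Hamilton identity
\[
AB+BA=\tr(A)\,B+\tr(B)\,A+\big(\tr(AB)-\tr(A)\tr(B)\big)I\qquad(A,B\in\mathrm{SL}(2,\mathbb{C})),
\]
equivalently the skein relation combined with Lemma~\ref{Lemma_tr}(iii), so as to peel the three factors $M_{i_3},M_{i_2},M_{i_1}$ off the tail $M_{i_k}\cdots M_{i_4}$ and re-express everything through word traces of length $\le k-1$. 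Once $g=\tilde g$ is known, relation \eqref{Mn_prod} gives $M_{n+1}=(M_n\cdots M_1)^{-1}$, and since $\tr(A^{-1})=\tr(A)$ on $\mathrm{SL}(2,\mathbb{C})$ we obtain $g_{n,\dots,1}=\tr(M_n\cdots M_1)=\tr(M_{n+1})=a_{n+1}$, that is $f_r=0$.

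The main obstacle is the exact bookkeeping in the derivation of \eqref{Poly_g}: matching the precise coefficients and signs of the reduction. I expect the cleanest way to organize it is to expand $M_{i_3}M_{i_2}M_{i_1}$ in the basis $\{I,M_{i_1},M_{i_2},M_{i_3}\}$ of the matrix algebra $M_2(\mathbb{C})$, which is valid on the Zariski-dense locus where $\vec v_{i_1},\vec v_{i_2},\vec v_{i_3}$ are linearly independent, and then to invoke that a polynomial trace identity valid on a dense set holds identically. This route keeps the Gram determinant out of the denominators and, after clearing it, reproduces the precise form \eqref{Poly_g}; the Type~1 and Type~2 families should by contrast require only the short determinantal arguments above.
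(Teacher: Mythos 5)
Your proposal is correct, and for the Type 1 and Type 2 families it is genuinely more self-contained than the paper: the paper simply cites Ashley--Burelle--Lawton for those two families, whereas you prove them directly by passing to the traceless parts $A_i^0=M_i-\tfrac12 a_iI$, recognizing $z_{i,j}=\tr(A_i^0A_j^0)$ as the Gram matrix of the trace form on $\mathfrak{sl}(2,\mathbb{C})\cong\mathbb{C}^3$ and $s_3$ as twice a $3\times 3$ determinant of coordinate vectors; Type 1 then reads $\det(V_I^{\mathsf T}GV_J)=\det V_I\,\det G\,\det V_J$ with $\det G=-2$, and Type 2 is the vanishing of a $4\times4$ determinant of rank at most $3$. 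These computations check out (and, as you note, irreducibility plays no role in the vanishing). For the Type 3 relation your route coincides with the paper's own argument in Appendix \ref{Appendix}: identify the recursively defined $g$'s with the word traces $\tr(M_{i_k}\cdots M_{i_1})$ by showing the latter satisfy the recurrence \eqref{Poly_g}, then conclude via $\tr(M_n\cdots M_1)=\tr(M_{n+1}^{-1})=\tr(M_{n+1})=a_{n+1}$. The one step you defer --- ``the exact bookkeeping in the derivation of \eqref{Poly_g}'' --- is precisely where the paper's proof lives, and it is simpler than the basis-expansion-plus-Zariski-density workaround you sketch: a \emph{single} application of the quadrilinear trace identity of Lemma \ref{Lemma_reduction} with $A=M_{i_k}\cdots M_{i_4}$, $B=M_{i_3}$, $C=M_{i_2}$, $D=M_{i_1}$ reproduces \eqref{Poly_g} term by term, with no repeated peeling, no genericity argument, and no denominators to clear. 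So the only substantive difference for Type 3 is that you propose to re-derive, rather than quote, that four-matrix identity; everything else in your plan matches the paper.
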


%%%%
\begin{remark}\label{remark_type3}
In \cite{ABL}, Ashley-Burelle-Lawton considered 
$\mathrm{Hom}(F_n,\mathrm{SL}(2,\mathbb{C}))/\!/ \mathrm{SL}(2,\mathbb{C})$ and derived polynomials of Types 1 and 2, where $F_n$ is a free group of rank $n$.
When $n=3$, we have $x_{321}=a_4$ and the polynomial $f_1(x)$ is nothing but the cubic polynomial considered by many authors (Jimbo \cite{J}, Iwasaki \cite{I2}, etc.) in the study of Painlev\'{e} differential equation.
When $n\ge 4$, the set 
\[
\{x \in \mathbb{C}^m\,;\,f_1(x)=\cdots=f_{r-1}(x)=0\}
\]
corresponds to the moduli space
\begin{equation}\label{Eq_InM}
\{(M_1,\ldots,M_n) \in \mathcal{O}(a_1)\times 
  \cdots \times \mathcal{O}(a_n)\}/\sim.
\end{equation}
Therefore
we have to add the condition $f_r(x)=0$
in order to add the condition $M_{n+1}\in \mathcal{O}(a_{n+1})$ to \eqref{Eq_InM}. 
The fact that the condition $f_r(x)=0$ is equal to the condition $M_{n+1} \in \mathcal{O}(a_{n+1})$ can be shown by using Lemma \ref{Lemma_reduction} stated later. 
We will give a more detailed explanation in Appendix \ref{Appendix}.
\end{remark}
%%%%

Using the polynomials in Proposition \ref{Prop_invariants}, we set the affine algebraic set
\[
\mathcal{S}(a)=\{x \in \mathbb{C}^m\,;\,f_1(x)=\cdots=f_{r}(x)=0\}.
\]
Then the assignment of the coordinates \eqref{tr_cood_2} defines the mapping 
\begin{equation}\label{Eq_Phi}
 \begin{array}{cccc}
\Phi:& \mathcal{M}(a)^{irr} & \to & \mathcal{S}(a)  \\
       &\rotatebox{90}{$\in$} & & \rotatebox{90}{$\in$} \\
       & [\rho] & \mapsto & x
\end{array}.
\end{equation}
%%%%
\begin{proposition}[cf. Lubotzky-Magid \cite{LM}, Theorem 1.28]
The map \eqref{Eq_Phi} is injective.
\end{proposition}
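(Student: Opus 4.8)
The plan is to upgrade the finite list of trace coordinates $x$ into the full character of $\rho$, and then invoke the classical rigidity of absolutely irreducible representations (this is the content of the cited \cite{LM}). So suppose $[\rho]=[(M_1,\dots,M_n)]$ and $[\rho']=[(M_1',\dots,M_n')]$ lie in $\mathcal{M}(a)^{irr}$ and have the same image $x=\Phi([\rho])=\Phi([\rho'])$, i.e.\ $\tr M_j=\tr M_j'=a_j$, $\tr(M_jM_i)=\tr(M_j'M_i')$ and $\tr(M_kM_jM_i)=\tr(M_k'M_j'M_i')$ for all indices. My first claim is that this already forces $\tr\rho(w)=\tr\rho'(w)$ for \emph{every} word $w$ in $\gamma_1,\dots,\gamma_n$ and their inverses, so that $\rho$ and $\rho'$ have the same character $\chi$. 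Indeed, since $\det M_i=1$ the skein relation gives $\tr(M_i^{-1}W)=a_i\tr W-\tr(M_iW)$, which eliminates all inverses, while the Cayley--Hamilton identity $M_i^2=a_iM_i-I$ eliminates all repeated letters; combined with the reduction rules of Lemma \ref{Lemma_tr} and the recursive polynomials $g_{i_k,\dots,i_1}$ of Proposition \ref{Prop_invariants}, every $\tr\rho(w)$ becomes one fixed polynomial in the entries of $x$ together with the fixed data $a_j$. Hence $\chi_\rho=\chi_{\rho'}$.

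Next I would pass from equal characters to conjugacy by a Burnside plus Skolem--Noether argument. Since $\rho$ is irreducible and $\mathbb{C}$ is algebraically closed, Burnside's theorem says the matrices $\{\rho(w)\}$ span $\mathrm{M}_2(\mathbb{C})$; choose words $w_1,\dots,w_4$ with $\rho(w_1),\dots,\rho(w_4)$ a basis. The Gram matrix $G=\big(\tr(\rho(w_a)\rho(w_b))\big)_{a,b}=\big(\chi(w_aw_b)\big)_{a,b}$ is invertible because the trace form $(X,Y)\mapsto\tr(XY)$ on $\mathrm{M}_2(\mathbb{C})$ is non-degenerate. As $G$ depends only on $\chi$, it is also the Gram matrix of $\rho'(w_1),\dots,\rho'(w_4)$, so these are a basis as well. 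For an arbitrary word $w$, the coordinates of $\rho(w)$ in the basis $\{\rho(w_a)\}$ are obtained by applying $G^{-1}$ to the vector $(\chi(ww_b))_b$; since the identical linear data compute the coordinates of $\rho'(w)$ in $\{\rho'(w_a)\}$, the linear bijection $\varphi\colon\mathrm{M}_2(\mathbb{C})\to\mathrm{M}_2(\mathbb{C})$ with $\varphi(\rho(w_a))=\rho'(w_a)$ in fact satisfies $\varphi(\rho(w))=\rho'(w)$ for every $w$.

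It then remains to conclude. Because $\varphi(\rho(w)\rho(w'))=\varphi(\rho(ww'))=\rho'(ww')=\rho'(w)\rho'(w')=\varphi(\rho(w))\varphi(\rho(w'))$ and such products span $\mathrm{M}_2(\mathbb{C})$, the map $\varphi$ is a unital algebra automorphism of $\mathrm{M}_2(\mathbb{C})$; by Skolem--Noether it is inner, say $\varphi(Y)=PYP^{-1}$ with $P\in\mathrm{GL}(2,\mathbb{C})$. Rescaling $P$ so that $\det P=1$ (this does not change the conjugation) yields $M_j'=\rho'(\gamma_j)=PM_jP^{-1}$ for all $j$ with $P\in\mathrm{SL}(2,\mathbb{C})$, that is $[\rho]=[\rho']$, which is the desired injectivity.

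I expect the genuinely delicate point to be the very first step, namely verifying that the finite coordinate list $x$ determines \emph{all} traces; this is the computational heart, but it is precisely what the identities of Lemma \ref{Lemma_tr} and the recursive polynomials of Proposition \ref{Prop_invariants} are designed to supply. I would also flag why I route the second half through Burnside and Skolem--Noether rather than through an explicit normalization: one cannot in general find two generators $M_i,M_j$ that already act irreducibly (three matrices whose eigenlines form a ``triangle'' give an irreducible tuple in which every pair is reducible), so the naive ``diagonalise $M_1$ and solve for the rest from $a_1,x_{k1},x_{k2},x_{k21}$'' can break down, whereas the Burnside--Skolem--Noether route is uniformly valid.
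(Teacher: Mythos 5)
Your proof is correct, and it is worth noting that the paper itself offers no argument for this proposition at all: it simply delegates to \cite{LM}, Theorem 1.28, which is precisely the statement that an irreducible $\mathrm{SL}(2,\mathbb{C})$-representation is determined up to conjugacy by its character. What you have done is reconstruct the standard proof of that cited result and splice it together with the trace-reduction machinery that the paper does make explicit elsewhere. Your two halves are both sound: the passage from the finite coordinate list $x$ to the full character is exactly the Vogt--Fricke reduction (skein relation to remove inverses, Cayley--Hamilton to remove repeated letters, Lemma \ref{Lemma_tr} to reorder, and the four-matrix identity of Lemma \ref{Lemma_reduction} to cut products of length $\ge 4$ down to length $\le 3$), which is the same mechanism the paper uses in its Appendix to derive the Type 3 relation; and the passage from equal characters to conjugacy via Burnside, the non-degeneracy of the trace form, and Skolem--Noether is the classical argument underlying \cite{LM}. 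Your closing remark about why one should not try to normalize a single ``irreducible pair'' of generators is a genuinely good observation -- the triangle-of-eigenlines example shows the naive normalization can fail, and it also explains why the paper's own parametrization in Section \ref{Section_Parametrization} has to work on a union of charts $\mathcal{M}_{kj}^{(i_0)}(a)$ rather than globally. The only thing you leave implicit is the inductive bookkeeping in the word reduction, but you correctly flag it and it is standard; nothing in the argument fails.
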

%%%%

In the following, referring to Iwasaki's work \cite{I2}, we parametrize the elements in $\mathcal{M}(a)^{irr}$ in terms of $\mathcal{S}(a)$, that is, we construct the ``inverse" mapping of $\Phi$. 
But as pointed out by Iwasaki, it seems difficult to construct the ``global" inverse mapping of $\Phi$. 
So we try to construct ``local" inverse mappings on some open subsets of $\mathcal{M}(a)^{irr}$ and $\mathcal{S}(a)$, and glue them to construct the desired inverse mapping.
To explain this, we prepare some notions.

For any polynomial $p=p(x) \in \mathbb{C}[x]$, we define the Zariski open subsets
\[
\begin{aligned}
\mathcal{S}(a)[p]&=\{x\in\mathcal{S}(a)\,;\, p(x) \neq 0\}, \\
\mathcal{M}(a)[p]&=\{[\rho] \in \mathcal{M}(a)^{irr}\,;\,p(\Phi([\rho]))\neq 0\}
\end{aligned}
\]
of $\mathcal{S}(a)$ and $\mathcal{M}(a)^{irr}$, respectively.
Next we define the subset $\mathcal{I}_n \subset \mathbb{N}^2$ by
 \begin{equation}\label{Eq_I_n}
 \mathcal{I}_n=\left\{(j,k) \in \mathbb{N}^2\,;\,
 1\le j \le n~\text{and $k\in\mathbb{N}$ is given by $(*)$ for each $j$}
 \right\},
 \end{equation}
 where the condition $(*)$ is 
 \[
  \left\{
  \begin{array}{ccc}
 j=1 & \Rightarrow & 2\le k<n, \\
 2\le j<n & \Rightarrow & j<k\le n, \\
 j=n & \Rightarrow & k=1.
 \end{array}
 \right.
 \]
 For example, when $n=3$, the set $\mathcal{I}_3$ is given by
 $
 \{(1,2),(2,3),(3,1)\}.
 $
 
For each $(j,k)\in \mathcal{I}_n$, we set
\begin{equation} \label{Eq_p_kj}
p_{kj}^{(i_0)}(x)= \begin{cases}
  (x_{kj}^2-4)\psi(x_{kj},a_{k},a_{j}) & i_0=0,\\
  (x_{kj}^2-4)\psi(x_{kji_0},x_{kj},a_{i_0}) & i_0 \in \{1,2,\ldots,n\}\setminus\{j,k\},
  \end{cases}
\end{equation}
where the polynomial $\psi(s,t,u)$ is defined by
\begin{equation}\label{Eq_psi}
 \psi(s,t,u)=s^2+t^2+u^2-stu-4.
\end{equation}
\begin{remark}
Note that $i_0\in\{1,2,\ldots,n\}\setminus\{j,k\}$ in \eqref{Eq_p_kj} does not necessarily satisfy $i_0<j<k$.   
In the case $j<i_0<k$ (resp. $j<k<i_0$), we understand that $x_{kji_0}$ means the polynomial of $x_{ki_0j}$ (resp. $x_{i_0kj}$) given by the relation in Lemma \ref{Lemma_tr} (iii) (resp. (ii)).
Also, in the case $k=1<i_0<n=j$, we understand that $x_{kji_0}=x_{1ni_0}=x_{ni_01}$ by the relation in Lemma \ref{Lemma_tr} (ii).
Hereafter, we shall adopt such understanding as appropriate.
\end{remark}
Let us define the open sets
\begin{equation}
 \mathcal{S}_{kj}^{(i_0)}(a)=\mathcal{S}(a)[p_{kj}^{(i_0)}], \quad 
 \mathcal{M}_{kj}^{(i_0)}(a)=\mathcal{M}(a)[p_{kj}^{(i_0)}].
\end{equation}
Then we see $\Phi(\mathcal{M}_{kj}^{(i_0)}(a)) \subset \mathcal{S}_{kj}^{(i_0)}(a)$ immediately.
Hence the mapping
\begin{equation}\label{Eq_Phi_rest}
\Phi_{kj}^{(i_0)}:=\Phi \vert_{\mathcal{M}_{kj}^{(i_0)}}\,:\,\mathcal{M}_{kj}^{(i_0)}(a) ~  \to ~ \mathcal{S}_{kj}^{(i_0)}(a)
\end{equation}
is well-defined.
Our aim is to construct the inverse mapping of $\Phi_{kj}^{(i_0)}$.
Let us fix a square root of $x_{kj}^2-4$ and put
\begin{equation}\label{r_kj}
r_{kj}=\sqrt{x_{kj}^2-4},\quad \lambda_{kj}^{\pm}=\frac{x_{kj}\pm r_{kj}}{2}.
\end{equation}
Now we define the following.

%%%%
%%%%
\begin{definition}\label{Definition_NormalForm}
For each $(j,k)\in\mathcal{I}_n$ and $i_0 \in \{0,1,\ldots,n\}\setminus\{j,k\}$, let 
\begin{equation}\label{Eq_varphi_kj}
\varphi_{kj}^{(i_0)}: \mathcal{S}_{kj}^{(i_0)}(a)~\to~\mathcal{M}_{kj}^{(i_0)}(a)
\end{equation}
be the map associating each $x \in \mathcal{S}_{kj}^{(i_0)}(a)$ with the conjugacy class $[\rho]=[(M_1,\ldots,M_n)] \in \mathcal{M}_{kj}^{(i_0)}(a)$ whose representative 
$\rho=(M_1,\ldots,M_n)$ is given as follows;
 %%%%%%%%%%%%
 %%%%%%%%%%%%
 	\begin{enumerate}
	\item In the case $i_0=0$,
		  we take a representative $\rho=(M_1,\ldots,M_n)$ of the form
\begin{equation}\label{Eq_rep_case1}
\begin{aligned}
		&M_j=\begin{pmatrix}
			 -\dfrac{a_k-\lambda_{kj}^+ a_j}{r_{kj}} & -\dfrac{\psi(x_{kj},a_k,a_j)}{r_{kj}^2} \\[11pt]
			 1 & \dfrac{a_k-\lambda_{kj}^- a_j}{r_{kj}}
			 \end{pmatrix}, \\
		&M_k=\begin{pmatrix}
			 -\dfrac{a_j-\lambda_{kj}^+ a_k}{r_{kj}} & \dfrac{\lambda_{kj}^+ \psi(x_{kj},a_k,a_j)}{r_{kj}^2} \\[11pt]
			 -\lambda_{kj}^- & \dfrac{a_j-\lambda_{kj}^- a_k}{r_{kj}}
			 \end{pmatrix},
			 \\
		&M_i=\begin{pmatrix}
			 u_{11}^{(i)} & u_{12}^{(i)} \\
			 u_{21}^{(i)} & u_{22}^{(i)}
 			 \end{pmatrix} \quad (i \in \{1,2,\ldots,n\}\setminus\{j,k\}),
\end{aligned}
\end{equation}
where the diagonal entries of $M_i$ are given by
\begin{equation}\label{Eq_M_i_diag}
	\begin{aligned}
	u_{11}^{(i)}=\frac{x_{kji}-\lambda_{kj}^- a_i}{r_{kj}}, \quad
	u_{22}^{(i)}=-\frac{x_{kji}-\lambda_{kj}^+ a_i}{r_{kj}}
	\end{aligned}
\end{equation}
and the anti-diagonal entries are given by
\begin{equation}\label{Eq_u12_u21_i_case1}
	\begin{aligned}
	u_{12}^{(i)}&=\frac{x_{ki}-a_k u_{22}^{(i)}+\lambda_{kj}^+(x_{ji}-a_j u_{11}^{(i)})}{r_{kj}}, \\
	u_{21}^{(i)}&=\frac{r_{kj}\{x_{ki}-a_k u_{11}^{(i)}+\lambda_{kj}^-(x_{ji}-a_j u_{22}^{(i)})\}}{\psi(x_{kj},a_k,a_j)}.
	\end{aligned}
\end{equation}
%%%%
	\item In the case 
	$i_0 \in \{1,2,\ldots,n\}\setminus\{j,k\}$,
	we take a representative $\rho=(M_1,\ldots,M_n)$ of the form
\begin{equation}\label{Eq_rep_case2}
\begin{aligned}
		&M_j=\begin{pmatrix}
			 -\dfrac{a_k-\lambda_{kj}^+ a_j}{r_{kj}} & u_{12}^{(j)} \\[11pt]
			 u_{21}^{(j)} & \dfrac{a_k-\lambda_{kj}^- a_j}{r_{kj}}
			 \end{pmatrix},
			 \quad 
		M_k=\begin{pmatrix}
			 -\dfrac{a_j-\lambda_{kj}^+ a_k}{r_{kj}} & -\lambda_{kj}^+ u_{12}^{(j)} \\[11pt]
			 -\lambda_{kj}^- u_{21}^{(j)} & \dfrac{a_j-\lambda_{kj}^- a_k}{r_{kj}}
			 \end{pmatrix},
			 \\
			 &
	M_{i_0}=\begin{pmatrix}
			u_{11}^{(i_0)} & -\dfrac{\psi(x_{kji_0},x_{kj},a_{i_0})}{r_{kj}^2}\\[11pt]
	 		 1 & u_{22}^{(i_0)}
			 \end{pmatrix}, \\
	&M_i=\begin{pmatrix}
			 u_{11}^{(i)} & u_{12}^{(i)} \\
			 u_{21}^{(i)} & u_{22}^{(i)}
 			 \end{pmatrix} \quad (i \in \{1,2,\ldots,n\}\setminus\{j,k,i_0\}),
		\end{aligned}
\end{equation}
where the diagonal entries of $M_{i_0}$ and $M_i$ are given by
\begin{equation}\label{Eq_u11u22_i_case2}
	\begin{aligned}
	u_{11}^{(i)}= \dfrac{x_{kj{i}}-\lambda_{kj}^- a_{i}}{r_{kj}}, \quad
	u_{22}^{(i)}=-\dfrac{x_{kj{i}}-\lambda_{kj}^+ a_{i}}{r_{kj}}, 
	\quad (i \in \{1,2,\ldots,i_0,\ldots,n\}\setminus\{j,k\})
	\end{aligned}
\end{equation}
and the anti-diagonal entries of $M_j,M_k$ and $M_i$ are given by
\begin{equation}\label{Eq_u12u21_ij_case2}
	\begin{aligned}
	u_{12}^{(j)}&=-\frac{x_{k{i_0}}-a_k u_{11}^{(i_0)}+\lambda_{kj}^-(x_{j{i_0}}-a_j u_{22}^{(i_0)})}{r_{kj}}, \\
	u_{21}^{(j)}&=-\frac{r_{kj}\{x_{k{i_0}}-a_k u_{22}^{(i_0)}+\lambda_{kj}^+(x_{j{i_0}}-a_j u_{11}^{(i_0)})\}}{\psi(x_{kji_0},x_{kj},a_{i_0})}, \\
	u_{12}^{(i)}&=-\frac{\lambda_{kj}^- x_{i {i_0}}+r_{kj} u_{11}^{(i)} u_{11}^{(i_0)}-x_{kji{i_0}}}{r_{kj}}, \\
	u_{21}^{(i)}&=-\frac{r_{kj}(\lambda_{kj}^+ x_{i {i_0}}-r_{kj} u_{22}^{(i)} u_{22}^{(i_0)}-x_{kji{i_0}})}{\psi(x_{kji_0},x_{kj},a_{i_0})}.
	\end{aligned}
\end{equation}
Here $x_{kji{i_0}}$ is defined by
\begin{equation}\label{Eq_xkjii0}
\begin{split}
x_{kji{i_0}}=&\frac{1}{2}(a_k a_j a_i a_{i_0} +a_k x_{ji{i_0}}+a_j x_{ki{i_0}}+a_i x_{kj{i_0}}+a_{i_0} x_{kji}\\
&+x_{kj} x_{i {i_0}}-x_{ki}x_{j{i_0}}+x_{k{i_0}}x_{ji}  \\
&-a_k a_j x_{i{i_0}}-a_k a_{i_0} x_{ji}-a_j a_i x_{k{i_0}}-a_{i_0} a_i x_{kj}).
\end{split}
\end{equation}
	\end{enumerate}
\end{definition}
%%%%%%%%%%%
%%%%%%%%%

This definition is a generalization of the normal forms by Iwasaki \cite[Definition 3.3]{I2}.
The well-definedness of Definition \ref{Definition_NormalForm} is guaranteed by the following lemma.

%%%%
\begin{lemma}\label{Lemma_welldef}
The map $\varphi_{kj}^{(i_0)}$ is well-defined, that is, the conjugacy class of the tuple 
$(M_1,\ldots,M_n)$ defined in \eqref{Eq_rep_case1} or \eqref{Eq_rep_case2} is uniquely determined, not depending on the choice of the branch in \eqref{r_kj}.
\end{lemma}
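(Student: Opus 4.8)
The only choice made in Definition~\ref{Definition_NormalForm} is the branch of the square root $r_{kj}=\sqrt{x_{kj}^2-4}$ in \eqref{r_kj}. Replacing $r_{kj}$ by $-r_{kj}$ interchanges $\lambda_{kj}^+$ and $\lambda_{kj}^-$, as is immediate from \eqref{r_kj} (recall also $\lambda_{kj}^+\lambda_{kj}^-=1$). Write $(M_1,\dots,M_n)$ for the tuple built from $r_{kj}$ and $(M_1',\dots,M_n')$ for the one built from $-r_{kj}$. The plan is to produce a single matrix $P\in\mathrm{SL}(2,\mathbb{C})$, independent of the index $i$, such that $M_i'=PM_iP^{-1}$ for all $i$; the two conjugacy classes then coincide, which is precisely the assertion.

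My candidate is the anti-diagonal matrix $P=\bigl(\begin{smallmatrix}0&\alpha\\ \beta&0\end{smallmatrix}\bigr)$ with $\alpha\beta=-1$ (so that $\det P=1$) and $\alpha^2=\Psi/r_{kj}^2$, where I set $\Psi=\psi(x_{kj},a_k,a_j)$ in case (i) and $\Psi=\psi(x_{kji_0},x_{kj},a_{i_0})$ in case (ii). Since $x\in\mathcal{S}_{kj}^{(i_0)}(a)$ forces $p_{kj}^{(i_0)}(x)\neq0$, we have both $r_{kj}^2=x_{kj}^2-4\neq0$ and $\Psi\neq0$, so $\alpha,\beta$ exist and $P$ is well-defined. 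A one-line computation gives, for any $M=\bigl(\begin{smallmatrix}a&b\\ c&d\end{smallmatrix}\bigr)$,
\[
PMP^{-1}=\begin{pmatrix}d & -\alpha^2 c\\ -\beta^2 b & a\end{pmatrix}=\begin{pmatrix}d & -\dfrac{\Psi}{r_{kj}^2}\,c\\[6pt] -\dfrac{r_{kj}^2}{\Psi}\,b & a\end{pmatrix};
\]
that is, conjugation by $P$ swaps the diagonal entries and sends the pair of anti-diagonal entries $(b,c)$ to $(-\tfrac{\Psi}{r_{kj}^2}c,\,-\tfrac{r_{kj}^2}{\Psi}b)$.

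It then remains to verify that flipping the branch has exactly this effect on each $M_i$. For the diagonal entries this is immediate: in \eqref{Eq_M_i_diag}, \eqref{Eq_u11u22_i_case2} and in the diagonal parts of $M_j,M_k,M_{i_0}$ in \eqref{Eq_rep_case1} and \eqref{Eq_rep_case2}, the substitution $r_{kj}\to-r_{kj}$, $\lambda_{kj}^+\leftrightarrow\lambda_{kj}^-$ visibly interchanges $u_{11}^{(i)}$ and $u_{22}^{(i)}$. For the anti-diagonal entries, I would check directly from \eqref{Eq_u12_u21_i_case1} (case (i)) and \eqref{Eq_u12u21_ij_case2} (case (ii)) that the flipped $(1,2)$-entry equals $-\tfrac{\Psi}{r_{kj}^2}$ times the original $(2,1)$-entry, and symmetrically for the $(2,1)$-entry; the mechanism is that the defining expression for $u_{12}$ with the parameters flipped reproduces, up to this scalar, the expression for $u_{21}$, which is the structural symmetry built into those formulas. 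Here one uses that the quantity $x_{kjii_0}$ of \eqref{Eq_xkjii0} involves only the $a$'s and the trace coordinates and is therefore branch-invariant. Matching these relations against the displayed form of $PMP^{-1}$ yields $M_i'=PM_iP^{-1}$ for every $i$, completing the argument.

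The genuine work is the anti-diagonal bookkeeping, and I expect the main obstacle to lie in case (ii): there $u_{12}^{(i)},u_{21}^{(i)}$ carry the extra index $i_0$ and are entangled with $x_{kjii_0}$, so tracking every parameter through the flip $r_{kj}\to-r_{kj}$, $\lambda_{kj}^+\leftrightarrow\lambda_{kj}^-$ is where sign errors are most likely. The diagonal swaps and the special matrices $M_j,M_k,M_{i_0}$ should fall out immediately, but the general $M_i$ with its full anti-diagonal formulas will require care. As a cross-check (or an alternative route), one may note that both tuples carry the same, manifestly branch-free, trace coordinates $x$ and are irreducible, so injectivity of $\Phi$ already forces them to be conjugate; nonetheless the explicit $P$ above is more self-contained and delivers branch-independence directly.
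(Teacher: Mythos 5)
Your proposal is correct and is essentially the paper's own argument: the paper cancels the branch flip by conjugating with the anti-diagonal matrix $\begin{pmatrix}0&\xi\\-\xi^{-1}&0\end{pmatrix}$, $\xi^2=\psi(x_{kj},a_k,a_j)/r_{kj}^2$, which is exactly your $P$ with $\alpha=\xi$, $\beta=-\xi^{-1}$, and it likewise records the entrywise effect of the flip without writing out every verification (treating only $i_0=0$ explicitly and declaring the other cases analogous, with $\Psi=\psi(x_{kji_0},x_{kj},a_{i_0})$ as you say).
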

%%%%
\begin{proof}
We prove this lemma in a similar manner to Lemma 3.4 in \cite{I2}.
We only consider the case $i_0=0$: the other cases are treated in a similar manner and hence omitted.
Taking the other branch in \eqref{r_kj} has the effect that $r_{kj} \leftrightarrow -r_{kj}$ and $\lambda_{kj}^\pm \leftrightarrow\lambda_{kj}^{\mp}$,
which results in a change of the tuple $(M_1,\ldots,M_n)$:
if we express $M_s=(u_{pq}^{(s)})_{1\le p,q \le 2}$ for $s=1,\ldots,n$, the change is given by
\[
 \begin{pmatrix}
 u_{11}^{(s)} & u_{12}^{(s)} \\
 u_{21}^{(s)} & u_{22}^{(s)}
 \end{pmatrix}
 \quad 
 \leftrightarrow
 \quad 
 \begin{pmatrix}
 u_{22}^{(s)} & -\dfrac{\psi(x_{kj},a_k,a_j)}{r_{kj}^2}u_{21}^{(s)} \\[5pt]
 -\dfrac{r_{kj}^2}{\psi(x_{kj},a_k,a_j)}u_{12}^{(s)} & u_{11}^{(s)}
 \end{pmatrix}.
\]
However, this change is canceled by taking conjugation by a matrix
\[
\begin{pmatrix}
 0 & \xi \\
 -\xi^{-1} & 0
 \end{pmatrix}
 \quad 
 \mathrm{such~that}
 \quad 
 \xi^2=\frac{\psi(x_{kj},a_k,a_j)}{r_{kj}^2}.
\]
Hence the conjugacy class is independent of the choice of the branch.
\end{proof}
%%%%

\begin{definition}
We call the following Zariski open sets
\[
\mathcal{S}^\circ(a)=\bigcup_{(j,k)\in\mathcal{I}_n}\bigcup_{\substack{i_0=0 \\ i_0\neq j,k}}^n \mathcal{S}_{kj}^{(i_0)}(a),\quad 
\mathcal{M}^\circ(a)=\bigcup_{(j,k)\in\mathcal{I}_n}\bigcup_{\substack{i_0=0 \\ i_0\neq j,k}}^n \mathcal{M}_{kj}^{(i_0)}(a)
\]
the {\it big opens}.
\end{definition}
When $n=3$, the big opens $\mathcal{S}^\circ(a)$ and $\mathcal{M}^\circ(a)$ are nothing but the ones defined by Iwasaki \cite{I2}.

\begin{remark}\label{Remark_n=3}
When $n=3$, it holds that $\mathcal{S}^\circ(a)=\mathcal{S}(a)$ and $\mathcal{M}^\circ(a)=\mathcal{M}(a)$ for a generic $a \in \mathbb{C}^4$ (for the precise meaning of generic, see \cite{I2} or \cite{Ada}).
Although it is expected that the same result holds even in the case of general $n$, we have not obtained a proof yet.
\end{remark}

Now we establish a parametrization theorem for $\mathcal{M}^\circ(a)$ in terms of $\mathcal{S}^\circ(a)$, 
which is a generalization of Iwasaki \cite{I2} and Calligaris-Mazzocco \cite{CM}.

%%%%

\begin{theorem}\label{Theorem_Parametrization}
For each $(j,k)\in\mathcal{I}_n$ and $i_0 \in \{0,1,\ldots,n\}\setminus\{j,k\}$, the map
$\varphi_{kj}^{(i_0)}: \mathcal{S}_{kj}^{(i_0)}(a) \to \mathcal{M}_{kj}^{(i_0)}(a)$
in \eqref{Eq_varphi_kj} is bijective. 
These bijections are patched together to yield a global bijection between the big opens,
\begin{equation}\label{Eq_varphi}
\varphi: \mathcal{S}^\circ(a) \to \mathcal{M}^\circ(a).
\end{equation}
\end{theorem}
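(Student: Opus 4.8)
The plan is to show that, for each $(j,k)\in\mathcal{I}_n$ and $i_0\in\{0,1,\ldots,n\}\setminus\{j,k\}$, the map $\varphi_{kj}^{(i_0)}$ is a two-sided inverse of the restriction $\Phi_{kj}^{(i_0)}=\Phi|_{\mathcal{M}_{kj}^{(i_0)}}$, and then to deduce the global bijection from the already established injectivity of $\Phi$. Concretely, it suffices to prove two things: (1) the explicitly constructed tuple $(M_1,\ldots,M_n)$ of Definition \ref{Definition_NormalForm} defines a class lying in $\mathcal{M}_{kj}^{(i_0)}(a)$, so that $\varphi_{kj}^{(i_0)}$ has the asserted target; and (2) $\Phi_{kj}^{(i_0)}\circ\varphi_{kj}^{(i_0)}=\mathrm{id}$ on $\mathcal{S}_{kj}^{(i_0)}(a)$. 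Granting these, $\varphi_{kj}^{(i_0)}$ is injective because it admits a left inverse, and it is surjective because for any $[\rho]\in\mathcal{M}_{kj}^{(i_0)}(a)$ with $x=\Phi([\rho])$ one has $\Phi(\varphi_{kj}^{(i_0)}(x))=x=\Phi([\rho])$, whence $\varphi_{kj}^{(i_0)}(x)=[\rho]$ by injectivity of $\Phi$.

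For (1) I would first verify that each $M_i$ lies in $\mathrm{SL}(2,\mathbb{C})$ with the correct local datum. Using the elementary identities $\lambda_{kj}^++\lambda_{kj}^-=x_{kj}$, $\lambda_{kj}^+\lambda_{kj}^-=1$ and $\lambda_{kj}^+-\lambda_{kj}^-=r_{kj}$ coming from \eqref{r_kj}, a direct substitution gives $\tr M_i=a_i$ and $\det M_i=1$; for example, when $i_0=0$ one computes $\det M_j=(x_{kj}^2-4)/r_{kj}^2=1$ after expanding $\psi$ via \eqref{Eq_psi}. Membership $M_{n+1}\in\mathcal{O}(a_{n+1})$ is exactly the Type 3 relation $f_r(x)=0$: the recursively defined polynomial $g_{n,n-1,\ldots,1}(x)$ computes $\tr(M_n\cdots M_1)=\tr M_{n+1}$ (see Remark \ref{remark_type3}), and since $a_{n+1}\neq\pm2$ this places $M_{n+1}$ in the right conjugacy class. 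Finally, irreducibility is supplied by the factor $\psi$ in $p_{kj}^{(i_0)}$: two elements $A,B\in\mathrm{SL}(2,\mathbb{C})$ possess a common invariant line if and only if $\psi(\tr(AB),\tr A,\tr B)=0$, so $p_{kj}^{(i_0)}(x)\neq0$ makes the pair $(M_j,M_k)$ (for $i_0=0$) or $(M_kM_j,M_{i_0})$ (for $i_0\neq0$) irreducible, and a fortiori the whole tuple has no common invariant line.

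For (2) I would compute the trace coordinates \eqref{tr_cood} of $\varphi_{kj}^{(i_0)}(x)$ and check that they return $x$. The normal form is engineered so that the diagonal entries \eqref{Eq_M_i_diag} encode $\tr M_i=a_i$ and $\tr(M_kM_jM_i)=x_{kji}$, while the off-diagonal entries \eqref{Eq_u12_u21_i_case1} are chosen to realize the pairwise traces $\tr(M_jM_i)=x_{ji}$ and $\tr(M_kM_i)=x_{ki}$ (in the case $i_0\neq0$ these roles are played by \eqref{Eq_u11u22_i_case2}, \eqref{Eq_u12u21_ij_case2} and the auxiliary quantity $x_{kjii_0}$ of \eqref{Eq_xkjii0}). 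These anchoring traces are thus built into the construction; the remaining coordinates $x_{i'i}$ and $x_{ki'i}$ with $i',i\notin\{j,k\}$ must then be checked, and this is precisely where the defining relations of $\mathcal{S}(a)$ (Types 1 and 2) are consumed to force mutual consistency of the off-diagonal entries. Once $\Phi(\varphi_{kj}^{(i_0)}(x))=x$ is established, the target condition of (1) is automatic since $p_{kj}^{(i_0)}(x)\neq0$ by hypothesis. For the gluing, each $\varphi_{kj}^{(i_0)}$ is now the inverse of $\Phi_{kj}^{(i_0)}$, so on any overlap $\mathcal{S}_{kj}^{(i_0)}(a)\cap\mathcal{S}_{k'j'}^{(i_0')}(a)$ the two maps are both inverse to the single globally defined $\Phi$ and therefore coincide; since $\mathcal{M}_{kj}^{(i_0)}(a)=\Phi^{-1}(\mathcal{S}_{kj}^{(i_0)}(a))$ for every pair, they patch to the desired bijection $\varphi=\Phi^{-1}|_{\mathcal{S}^\circ(a)}:\mathcal{S}^\circ(a)\to\mathcal{M}^\circ(a)$.

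The main obstacle will be the bookkeeping in (2): checking not merely the anchoring traces built into the normal form, but \emph{every} pairwise and triple trace, and confirming that the off-diagonal formulas \eqref{Eq_u12u21_ij_case2} are mutually consistent across all indices. This is the step that actually uses the full defining system of $\mathcal{S}(a)$, and the difficulty is organizational rather than conceptual; the potential ambiguity in the branch of $r_{kj}$ is already removed by Lemma \ref{Lemma_welldef}.
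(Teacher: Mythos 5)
Your overall strategy is sound, but it is the mirror image of the paper's, and the reflection moves essentially all of the difficulty onto the one step you leave unproved. The paper establishes the composition $\varphi_{kj}^{(i_0)}\circ\Phi_{kj}^{(i_0)}=\mathrm{id}$: starting from an \emph{actual} class $[\rho]\in\mathcal{M}_{kj}^{(i_0)}(a)$, it normalizes $M_kM_j$ to be diagonal and then treats the conditions $\tr M_s=a_s$, $\tr(M_kM_jM_i)=x_{kji}$, $\tr(M_jM_i)=x_{ji}$, $\tr(M_kM_i)=x_{ki}$ (resp.\ $\tr(M_iM_{i_0})=x_{ii_0}$ and $\tr(M_kM_jM_iM_{i_0})=x_{kjii_0}$ via Lemma \ref{Lemma_reduction}) purely as linear systems for the entries $u_{pq}^{(s)}$; these trace identities hold \emph{automatically} because they are traces of genuine matrices, so no relation of $\mathcal{S}(a)$ ever needs to be consumed. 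Surjectivity is then immediate, and injectivity comes from the rigidity of the normalization (only $\pm I$ preserves it). You instead propose to prove $\Phi_{kj}^{(i_0)}\circ\varphi_{kj}^{(i_0)}=\mathrm{id}$ and to import surjectivity from the injectivity of $\Phi$ (the proposition quoted from Lubotzky--Magid). That logical frame is valid, and your use of the Fricke identity $\tr(ABA^{-1}B^{-1})-2=\psi(\tr(AB),\tr A,\tr B)$ to extract irreducibility from $p_{kj}^{(i_0)}(x)\neq0$ is a correct point the paper leaves implicit.

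The gap is in your step (2). For an arbitrary $x\in\mathcal{S}_{kj}^{(i_0)}(a)$ --- not a priori of the form $\Phi([\rho])$ --- you must verify that \emph{every} coordinate is recovered, in particular $\tr(M_{i'}M_{i})=x_{i'i}$ and $\tr(M_{k'}M_{j'}M_{i'})=x_{k'j'i'}$ for index sets disjoint from $\{j,k,i_0\}$, none of which is built into the normal form. You call this ``organizational rather than conceptual,'' but it is the entire content of the assertion that the Type 1 and Type 2 polynomials cut out exactly the image of the trace map: carrying it out by hand inside the chart amounts to re-deriving the Ashley--Burelle--Lawton theorem, and you give no indication of how even one such identity would be extracted from $f_1=\cdots=f_{r-1}=0$. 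The paper's choice of direction exists precisely to avoid this computation. If you wish to keep your direction, the workable shortcut is to invoke the surjectivity of the trace map onto the locus cut out by the Type 1 and 2 relations (which is what the cited result of Ashley--Burelle--Lawton actually supplies, with $f_r=0$ then enforcing $M_{n+1}\in\mathcal{O}(a_{n+1})$); every $x\in\mathcal{S}_{kj}^{(i_0)}(a)$ is then already of the form $\Phi([\rho])$, and your step (2) collapses to the paper's linear-algebra computation.
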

%%%%

\begin{proof}
Fix $(j,k)\in\mathcal{I}_n$.
We first show that the map
$\varphi_{kj}^{(i_0)}$
 is surjective. 
Given any $[\rho]\in\mathcal{M}_{kj}^{(i_0)}(a)$, we set $x:=\Phi_{kj}^{(i_0)}([\rho])\in\mathcal{S}_{kj}^{(i_0)}$.
Let us show that 
\begin{equation}\label{Eq_surj}
\varphi_{kj}^{(i_0)}(x)=[\rho]
\end{equation}
holds, namely, we can express a representative of $[\rho]$ 
by the form \eqref{Eq_rep_case1} or \eqref{Eq_rep_case2}.
Since $x_{kj} \neq \pm2$ holds now, we have the numbers $\lambda_{kj}^{\pm}$ are distinct. 
Therefore we can take a representative $\rho=(M_1,\ldots,M_n)$ such that
\begin{equation}\label{Eq_MkMj}
M_kM_j=\begin{pmatrix}
\lambda_{kj}^+ & \\
 & \lambda_{kj}^-
 \end{pmatrix}.
\end{equation}
For this representative, we put
\begin{equation}
M_s=\begin{pmatrix} \label{Eq_Ms}
 	u_{11}^{(s)} & u_{12}^{(s)} \\
	u_{21}^{(s)} & u_{22}^{(s)}
\end{pmatrix}
\quad 
(1\le s \le n).
\end{equation}
Then, the equation \eqref{Eq_MkMj} yields
\begin{equation}\label{Eq_ukuj}
 \begin{cases}
 u_{11}^{(k)}= \lambda_{kj}^+ u_{22}^{(j)}, \\[5pt]
 u_{12}^{(k)}=-\lambda_{kj}^+ u_{12}^{(j)}, \\[5pt]
 u_{21}^{(k)}=-\lambda_{kj}^- u_{21}^{(j)}, \\[5pt]
 u_{22}^{(k)}=\lambda_{kj}^- u_{11}^{(j)}.
 \end{cases}
\end{equation}
The conditions $\tr(M_j)=a_j$ and $\tr(M_k)=a_k$ yield
\[
 \begin{cases}
 u_{11}^{(j)}+u_{22}^{(j)}=a_j, \\[5pt]
 \lambda_{kj}^+ u_{22}^{(j)}+\lambda_{kj}^-u_{11}^{(j)}=a_k,
 \end{cases}
\]
where we used \eqref{Eq_ukuj} to derive the second equation.
By solving this system we have
\begin{equation}\label{Eq_u11u22_j}
 u_{11}^{(j)}=-\frac{a_k-\lambda_{kj}^+a_j}{r_{kj}},\quad 
 u_{22}^{(j)}=\frac{a_k-\lambda_{kj}^-a_j}{r_{kj}}.
\end{equation}
Substituting this into \eqref{Eq_ukuj}, we obtain
\begin{equation}\label{Eq_u11u22_k}
 u_{11}^{(k)}=-\frac{a_j-\lambda_{kj}^+a_k}{r_{kj}},\quad 
 u_{22}^{(k)}=\frac{a_j-\lambda_{kj}^-a_k}{r_{kj}}.
\end{equation}
%
%Next we proceed to derive the diagonal elements $u_{11}^{(i)}, u_{22}^{(i)}$ for $i \notin \{j,k\}$.
For $i \in \{1,\ldots,n\}\setminus\{j,k\}$, the conditions $\tr(M_i)=a_i$ and $\tr(M_kM_jM_i)=x_{kji}$ yield
\[
 \begin{cases}
 u_{11}^{(i)}+u_{22}^{(i)}=a_i, \\[5pt]
 \lambda_{kj}^+ u_{11}^{(i)}+\lambda_{kj}^-u_{22}^{(i)}=x_{kji},
 \end{cases}
\]
where we used \eqref{Eq_MkMj} to derive the second equation.
By solving this system, we have
\begin{equation}\label{Eq_u11u22_i}
 u_{11}^{(i)}=\frac{x_{kji}-\lambda_{kj}^-a_i}{r_{kj}},\quad 
 u_{22}^{(i)}=-\frac{x_{kji}-\lambda_{kj}^+a_i}{r_{kj}}.
\end{equation}
The point is that the diagonal entries $\{u_{11}^{(s)},u_{22}^{(s)}\}$ of the representative $\rho=(M_1,\ldots,M_n)$ are determined
independently of $i_0$.

Next we proceed to derive the anti-diagonal entries.
Applying \eqref{Eq_u11u22_j} to the condition $\det(M_j)=1$, we have
\begin{equation}\label{Eq_u12u21_j}
u_{12}^{(j)}u_{21}^{(j)}=-\frac{\psi(x_{kj},a_k,a_j)}{r_{kj}^2}.
\end{equation}
Similarly, applying \eqref{Eq_u11u22_i} to the condition $\det(M_i)=1$, we have
\begin{equation}\label{Eq_u12u21_i}
u_{12}^{(i)}u_{21}^{(i)}=-\frac{\psi(x_{kji},x_{kj},a_i)}{r_{kj}^2}.
\end{equation}
From now on, we shall divide into two cases. 
%%%%%%%case1
\medskip \\
\noindent
\textbf{Case 1.} If $[\rho] \in \mathcal{M}_{kj}^{(0)}(a)$, we see that $u_{12}^{(j)}u_{21}^{(j)}\neq0$ holds from \eqref{Eq_u12u21_j}.
In this case, we can normalize $M_j$ so that
\begin{equation}\label{Eq_u21_u12_j}
 u_{21}^{(j)}=1, \quad
 u_{12}^{(j)}=-\frac{\psi(x_{kj},a_k,a_j)}{r_{kj}^2}
\end{equation}
by the conjugation $P\rho P^{-1}$ using a suitable diagonal matrix $P$. 
This conjugation does not change the diagonal entries of all $M_s$.
Hence we can assume from the beginning that \eqref{Eq_u21_u12_j} is satisfied.
Then we obtain $M_j$ and $M_k$ in \eqref{Eq_rep_case1}.

Let us derive $u_{12}^{(i)},u_{21}^{(i)}$ for each $i \in \{1,2,\ldots,n\}\setminus\{j,k\}$.
The conditions $\tr(M_j M_i)=x_{ji}$ and $\tr(M_k M_i)=x_{ki}$ yield
\[
 \begin{cases}
 u_{11}^{(j)}u_{11}^{(i)}+u_{12}^{(j)}u_{21}^{(i)}+u_{21}^{(j)}u_{12}^{(i)}+u_{22}^{(j)}u_{22}^{(i)}=x_{ji}, \\[5pt]
 \lambda_{kj}^+u_{22}^{(j)}u_{11}^{(i)}-\lambda_{kj}^+ u_{12}^{(j)}u_{21}^{(i)}-\lambda_{kj}^- u_{21}^{(j)}u_{12}^{(i)}+\lambda_{kj}^- u_{11}^{(j)}u_{22}^{(i)}=x_{ki},
 \end{cases}
\]
where we used \eqref{Eq_ukuj} to obtain the second equation.
By substituting \eqref{Eq_u11u22_j} and \eqref{Eq_u21_u12_j} into this system and solving it, we have \eqref{Eq_u12_u21_i_case1}.
Comparing above results and \eqref{Eq_rep_case1} with \eqref{Eq_M_i_diag} and \eqref{Eq_u12_u21_i_case1}, we conclude that $\varphi_{kj}^{(0)}(x)=[\rho]$, 
namely, the map $\varphi_{kj}^{(0)}$ is surjective.
%
%%%%%%%case2
\medskip \\
\noindent
\textbf{Case 2.}
If $[\rho]\in\mathcal{M}_{kj}^{(i_0)}(a)$ for some $i_0 \in \{1,2,\ldots,n\}\setminus\{j,k\}$, thanks to \eqref{Eq_u12u21_i}, we can normalize $M_{i_0}$ so that
\begin{equation} \label{Eq_u21_u12_i0_case2}
u_{21}^{(i_0)}=1,\quad u_{12}^{(i_0)}=-\frac{\psi(x_{kj{i_0}},x_{kj},a_{i_0})}{r_{kj}^2}
\end{equation}
for the same reason as Case 1.

We proceed to determine $u_{12}^{(j)}$ and $u_{21}^{(j)}$.
The conditions $\tr(M_j M_{i_0})=x_{ji_0}$ and $\tr(M_k M_{i_0})=x_{k{i_0}}$ yield
\[
 \begin{cases}
 u_{11}^{(j)}u_{11}^{(i_0)}+u_{12}^{(j)}u_{21}^{(i_0)}+u_{21}^{(j)}u_{12}^{(i_0)}+u_{22}^{(j)}u_{22}^{(i_0)}=x_{ji_0}, \\[5pt]
 \lambda_{kj}^+u_{22}^{(j)}u_{11}^{(i_0)}-\lambda_{kj}^+ u_{12}^{(j)}u_{21}^{(i_0)}-\lambda_{kj}^- u_{21}^{(j)}u_{12}^{(i_0)}+\lambda_{kj}^- u_{11}^{(j)}u_{22}^{(i_0)}=x_{ki_0},
 \end{cases}
\]
where we used \eqref{Eq_ukuj} to obtain the second equation.
Substituting \eqref{Eq_u11u22_i} with $i=i_0$ and \eqref{Eq_u21_u12_i0_case2} into this system and solving it, we have $u_{12}^{(j)}$ and $u_{21}^{(j)}$ in \eqref{Eq_u12u21_ij_case2}.

Next, we proceed to derive $u_{12}^{(i)},u_{21}^{(i)}$ for $i \notin \{i_0,j,k\}$.
From \cite{V, ABL}, we have the following. 
%%%%
\begin{lemma}\label{Lemma_reduction}
Let $A,B,C,D \in \mathrm{SL}(2,\mathbb{C})$. 
Then it holds that
\[
\begin{split}
\tr(ABCD)&=\frac{1}{2}\{\tr(A) \tr(B) \tr(C) \tr(D) \\
&+\tr(A)\tr(BCD) + \tr(B)\tr(ACD)+\tr(C) \tr(ABD)+\tr(D)\tr(ABC) \\
&+\tr(AB) \tr(CD) -\tr(AC)\tr(BD) +\tr(AD)\tr(BC) \\
&-\tr(A)\tr(B)\tr(CD)-\tr(A)\tr(D)\tr(BC)\\
&-\tr(B) \tr(C) \tr(AD)-\tr(D) \tr(C) \tr(AB)\}.
\end{split}
\]
\end{lemma}
Thanks to this lemma, we have $\tr(M_kM_jM_iM_{i_0})=x_{kji{i_0}}$, where $x_{kji{i_0}}$ is given in \eqref{Eq_xkjii0}.
Now, the conditions $\tr(M_iM_{i_0})=x_{i{i_0}}$ and $\tr(M_kM_jM_iM_{i_0})=x_{kji{i_0}}$ yield
\[
 \begin{cases}
 u_{11}^{(i)}u_{11}^{(i_0)}+u_{12}^{(i)}u_{21}^{(i_0)}+u_{21}^{(i)}u_{12}^{(i_0)}+u_{22}^{(i)}u_{22}^{(i_0)}=x_{ii_0}, \\[5pt]
 \lambda_{kj}^+(u_{11}^{(i)}u_{11}^{(i_0)}+u_{12}^{(i)}u_{21}^{(i_0)})
 +\lambda_{kj}^-(u_{21}^{(i)}u_{12}^{(i_0)}+u_{22}^{(i)}u_{22}^{(i_0)})=x_{kji{i_0}},
 \end{cases}
\]
where we used \eqref{Eq_MkMj} to obtain the second equation.
Substituting \eqref{Eq_u21_u12_i0_case2} into this system and solving it, we have $u_{12}^{(i)}$ and $u_{21}^{(i)}$ in \eqref{Eq_u12u21_ij_case2}.
Comparing above results and \eqref{Eq_rep_case2} with \eqref{Eq_u11u22_i_case2} and \eqref{Eq_u12u21_ij_case2}, we conclude that $\varphi_{kj}^{(i_0)}(x)=[\rho]$, 
namely, the map $\varphi_{kj}^{(i_0)}$ is surjective.

To show that the map $\varphi_{kj}^{(i_0)}$ is injective, we only notice that, once the normalizations \eqref{Eq_u21_u12_j} or \eqref{Eq_u21_u12_i0_case2} are employed, the admissible simultaneous similarity transformations for $(M_1,\ldots,M_n)$ are only the ones by $\pm I_n$.
However, such transformation leaves every entry in $(M_1,\ldots,M_n)$ unchanged.
This fact implies the injectivity of the map $\varphi_{kj}^{(i_0)}$. 
\end{proof}
The equality \eqref{Eq_surj} in the above proof implies that 
\[
\varphi_{kj}^{(i_0)}\circ\Phi_{kj}^{(i_0)}=\mathrm{id}_{\mathcal{M}_{kj}^{(i_0)}}.
\]
From the bijectivity, we see that the map $\varphi_{kj}^{(i_0)}(x)$ is the inverse of the restricted map $\Phi_{kj}^{(i_0)}$ in \eqref{Eq_Phi_rest}.
Namely, the map $\varphi$ in \eqref{Eq_varphi} is a ``global" inverse mapping of $\Phi \vert_{\mathcal{M}^{\circ}(a)}$.
%%%%

%%%%%%%%%%%%%%%%
%%%%%%%%%%%%%%%%
\section{Unitary monodromies in big open}
\label{Section_characterization}
%%%%%%%%%%%%%%%%
%%%%%%%%%%%%%%%%

In this section we study the unitarity and signatures of monodromies in $\mathcal{M}^\circ(a)$ 
in terms of $\mathcal{S}^\circ(a)$.
To show the monodromy $[\rho]\in\mathcal{M}(a)^{irr}$ is unitary, 
thanks to Proposition \ref{Proposition_rep_Herm}, 
we have only to show that one of the
representative has an invariant non-degenerate Hermitian form.
Therefore in this section, we consider the representative constructed in Definition \ref{Definition_NormalForm} and give a criterion for it to have an invariant Hermitian form.
To achieve this, we first prepare an important lemma.
%%%%
%%%%
\begin{lemma}[\cite{Ada}, Corollary 4.3]\label{Lemma_Necessory} 
Let $M$ be a matrix in $\mathrm{SL}(2,\mathbb{C})$.
If there is a non-degenerate 
Hermitian matrix $H$ satisfying 
$\bar{M}^T H M=H$,
then $\tr M \in \mathbb{R}$ holds.
\end{lemma}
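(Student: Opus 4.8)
=== PROOF PROPOSAL ===

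The plan is to prove the contrapositive in spirit by showing directly that the existence of an invariant non-degenerate Hermitian form for $M \in \mathrm{SL}(2,\mathbb{C})$ forces the trace to be real. The key algebraic fact I would exploit is that the condition $\bar{M}^T H M = H$ with $H$ invertible says precisely that $M$ is conjugate (by a matrix built from $H$) to a unitary-type transformation, and hence that $M$ and $\bar{M}^{-1}$ are similar.

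First I would rewrite the invariance relation $\bar{M}^T H M = H$ as $\bar{M}^T = H M^{-1} H^{-1}$. Taking complex conjugates of both sides and using that $H$ is Hermitian (so $\bar{H} = H^T$), this exhibits a similarity between $M^T$ (or equivalently $M$) and the conjugate-inverse $\overline{M^{-1}}$. Concretely, the relation gives $M = H^{-1} (\bar{M}^T)^{-1} H = H^{-1} \overline{(M^{-1})}^T \, H$. Since any matrix is similar to its transpose, $M$ is therefore similar to $\overline{M^{-1}}$, which yields the trace identity $\tr M = \tr(\overline{M^{-1}}) = \overline{\tr(M^{-1})}$.

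The conclusion then follows from the structure of $\mathrm{SL}(2,\mathbb{C})$: for $M \in \mathrm{SL}(2,\mathbb{C})$ we have $\det M = 1$, so by Cayley-Hamilton $\tr(M^{-1}) = \tr M$. Combining this with the identity from the previous step gives $\tr M = \overline{\tr M}$, which is exactly the assertion that $\tr M$ is real.

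The main obstacle, such as it is, is purely bookkeeping with conjugates and transposes: one must be careful that $H$ being Hermitian means $\bar{H}^T = H$, and track this correctly through the manipulation so that the similarity is genuinely between $M$ and $\overline{M^{-1}}$ rather than some unwanted variant. Since the statement is cited from \cite{Ada}, I expect the intended proof to be exactly this short trace computation, with the $\mathrm{SL}(2)$-specific identity $\tr(M^{-1}) = \tr M$ doing the final work; there is no serious analytic or combinatorial difficulty to overcome here.
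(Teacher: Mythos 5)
Your argument is correct. The paper itself gives no proof of this lemma---it is only cited from \cite{Ada}, Corollary 4.3---so there is nothing internal to compare against; your derivation is the standard one and it works. One small remark: the step where you invoke ``any matrix is similar to its transpose'' is unnecessary overhead. From $\bar{M}^T H M = H$ you get $\bar{M}^T = H M^{-1} H^{-1}$ directly, and taking traces of both sides already gives $\overline{\tr M} = \tr(\bar{M}^T) = \tr(H M^{-1} H^{-1}) = \tr(M^{-1})$, using only that the trace is invariant under transposition and conjugation by an invertible matrix. Combined with $\tr(M^{-1}) = \tr M$ for $M \in \mathrm{SL}(2,\mathbb{C})$ (Cayley--Hamilton, or the eigenvalues $\lambda,\lambda^{-1}$), this yields $\overline{\tr M}=\tr M$. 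Note also that the Hermitian symmetry of $H$ plays no role in this computation---only its invertibility is used---so your worry about bookkeeping with $\bar{H}^T=H$ is moot.
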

%%%%
%%%%
As a consequence of this lemma, we see that the assumption
\begin{equation}\label{Eq_a_R}
a=(a_1,a_2\ldots,a_n,a_{n+1}) \in \mathbb{R}^{n+1}
\end{equation}
is needed for the existence of unitary monodromies in $\mathcal{M}(a)^{irr}$.
Also, we have to assume $x=\Phi([\rho])\in \mathcal{S}^\circ(a)\cap\mathbb{R}^m$ for the unitarity of $[\rho] \in \mathcal{M}(a)^{irr}$.
The following proposition states that this condition is also a sufficient condition for the existence of an invariant Hermitian form.
%%

%%%%
\begin{proposition}\label{Prop_UnitaryMonodromy}
Assume \eqref{Eq_a_R} and fix the bijection 
$\varphi:\mathcal{S}^\circ(a) \to \mathcal{M}^\circ(a)$
by \eqref{Eq_varphi}.
Then the element
$\varphi(x)=[(M_1,\ldots,M_n)] \in \mathcal{M}^\circ(a)$
has an invariant non-degenerate Hermitian form if and only if
$x \in \mathcal{S}^\circ(a)\cap\mathbb{R}^m$.  
 If this is the case, the followings hold.
\begin{enumerate}
\item Let $x\in \mathcal{S}_{kj}^{(0)}(a)\cap{\mathbb{R}^m}$ and $(M_1,\ldots,M_n)$ be the representative of $\varphi(x)$ given by \eqref{Eq_rep_case1}.
Then the invariant non-degenerate Hermitian matrix $H$ satisfying \eqref{eq_invHerm} is given by
	 \begin{equation}\label{Eq_invHerm_case1}
	 H=
	   \begin{cases} 
	    h \begin{pmatrix}
	     1 & \\
	     & \dfrac{\psi(x_{kj},a_k,a_j)}{x_{kj}^2-4}
	     \end{pmatrix}  & (x_{kj}^2-4<0), \\[22pt]
	   h  \begin{pmatrix}
	      & \sqrt{-1} \\
	      -\sqrt{-1} &
	     \end{pmatrix} & (x_{kj}^2-4>0).
	   \end{cases}
	\end{equation}
\item Let $x \in \mathcal{S}_{kj}^{(i_0)}(a)\cap\mathbb{R}^m$ for some $i_0 \in \{1,2,\ldots, n\}\setminus\{j,k\}$ and $(M_1,\ldots,M_n)$ be the representative of $\varphi(x)$ given by \eqref{Eq_rep_case2}.
Then the invariant non-degenerate Hermitian matrix $H$ satisfying \eqref{eq_invHerm} is given by
	 \begin{equation} \label{Eq_invHerm_case2}
	 H=
	   \begin{cases} 
	   h  \begin{pmatrix}
	     1 & \\
	     & \dfrac{\psi(x_{kji_0},x_{kj},a_{i_0})}{x_{kj}^2-4}
	     \end{pmatrix}  & (x_{kj}^2-4<0), \\[22pt]
	    h \begin{pmatrix}
	      & \sqrt{-1} \\
	      -\sqrt{-1} &
	     \end{pmatrix}  &(x_{kj}^2-4>0),
	   \end{cases}
	  \end{equation}
	\end{enumerate}
where $h$ is an arbitrary non-zero real number.
These Hermitian matrices are uniquely determined up to a multiplication of real numbers.
\end{proposition}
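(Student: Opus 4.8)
The plan is to establish the equivalence in two directions, then verify the explicit Hermitian matrices, and finally confirm uniqueness. For the necessity direction, I would argue as follows. If $\varphi(x)=[(M_1,\ldots,M_n)]$ has an invariant non-degenerate Hermitian form, then in particular each $M_s$ satisfies $\bar{M}_s^T H M_s=H$, so by Lemma \ref{Lemma_Necessory} we get $\tr(M_s)=a_s\in\mathbb{R}$. But more is needed: I must show \emph{all} coordinates of $x$ are real. The key observation is that $H$ also forces $\bar{M}_jM_i$-type products to have real trace, or more directly, the invariance $\bar{M}_s^T H M_s=H$ together with $H=\bar{H}^T$ can be used to show $\overline{\tr(M_kM_jM_i)}=\tr(M_kM_jM_i)$ and similarly for pairwise products. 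Concretely, from $\bar{M}_s^THM_s=H$ one deduces $H^{-1}\bar{M}_s^TH=M_s^{-1}$, i.e.\ the conjugate-transpose of each $M_s$ is conjugate (via $H$) to its inverse; taking traces of words in the $M_s$ and using $\tr(M^{-1})=\tr(M)$ in $\mathrm{SL}(2,\mathbb{C})$ shows every $x_{ji}$ and $x_{kji}$ equals its own complex conjugate. Hence $x\in\mathbb{R}^m$.

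For the sufficiency direction, which is where the explicit formulas enter, I would fix the branch $r_{kj}=\sqrt{x_{kj}^2-4}$ according to the sign of $x_{kj}^2-4$ (real when $x_{kj}^2-4>0$, purely imaginary when $x_{kj}^2-4<0$) and simply \emph{verify by direct computation} that the candidate matrix $H$ in \eqref{Eq_invHerm_case1} or \eqref{Eq_invHerm_case2} satisfies $\bar{M}_s^T H M_s=H$ for each $s$. Because the representative is constructed so that $M_kM_j$ (equivalently $M_j,M_k$) takes the normalized form in Definition \ref{Definition_NormalForm}, it suffices to check the invariance relation on the generators $M_j$, $M_k$, and the remaining $M_i$ separately. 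The structural reason this works is that, with $x$ real, the entries of each $M_s$ expressed through $\lambda_{kj}^\pm$ and $r_{kj}$ satisfy conjugation symmetries: in the case $x_{kj}^2-4<0$ one has $\overline{\lambda_{kj}^\pm}=\lambda_{kj}^\mp$ and $\bar r_{kj}=-r_{kj}$, so the diagonal $H$ intertwines $M_s$ with its conjugate-inverse; in the case $x_{kj}^2-4>0$ the quantities $\lambda_{kj}^\pm$ and $r_{kj}$ are real, and the anti-diagonal $H$ with entries $\pm\sqrt{-1}$ supplies the needed swap. I would present the verification for $M_j$ and $M_k$ in full (these are the shortest) and indicate that the check for general $M_i$ follows from the same conjugation identities applied to formulas \eqref{Eq_u12_u21_i_case1} or \eqref{Eq_u12u21_ij_case2}.

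The main obstacle I anticipate is the sufficiency computation for the generic $M_i$ (and $M_{i_0}$) in Case 2, where the anti-diagonal entries $u_{12}^{(i)},u_{21}^{(i)}$ depend through \eqref{Eq_u12u21_ij_case2} on the four-fold trace $x_{kji i_0}$ of \eqref{Eq_xkjii0}. Checking $\bar{M}_i^T H M_i=H$ there amounts to verifying several polynomial identities among the coordinates under complex conjugation, and one must confirm that reality of $x$ propagates correctly through $x_{kjii_0}$ — that is, that $\overline{x_{kjii_0}}=x_{kjii_0}$ when all the $x_{ji}$, $x_{kji}$ and $a_i$ are real, which is immediate from the formula since it is a real-coefficient polynomial in those quantities. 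The delicate point is bookkeeping the sign flips $r_{kj}\mapsto -r_{kj}$ and $\lambda^\pm\mapsto\lambda^\mp$ consistently across all entries; I would organize this by noting that conjugation acts on the whole tuple exactly as the branch change in Lemma \ref{Lemma_welldef}, so the conjugated representative $\overline{(M_1,\ldots,M_n)}$ is conjugate to $(M_1,\ldots,M_n)$ by the very matrix that $H$ encodes.

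Finally, for uniqueness up to a real scalar, I would invoke irreducibility: if $H_1,H_2$ are two invariant non-degenerate Hermitian matrices for the same irreducible $(M_1,\ldots,M_n)$, then $H_2^{-1}H_1$ commutes with every $M_s$, so by Schur's lemma $H_2^{-1}H_1=cI$ for some $c\in\mathbb{C}$; comparing $\bar{H}_i^T=H_i$ forces $c\in\mathbb{R}$, giving $H_1=cH_2$. This pins down $H$ up to the arbitrary real factor $h$ and completes the proof.
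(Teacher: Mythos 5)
Your proposal is correct and follows essentially the same route as the paper: necessity via Lemma \ref{Lemma_Necessory} applied to words in the generators, and sufficiency via the conjugation symmetries $\bar{r}_{kj}=-r_{kj}$, $\overline{\lambda_{kj}^{\pm}}=\lambda_{kj}^{\mp}$ of the normal-form entries, which the paper isolates as Lemma \ref{Lemma_symmetry}. The only organizational difference is that the paper \emph{derives} $H$ (first solving $\bar{\Lambda}_{kj}H\Lambda_{kj}=H$ for the diagonal product $\Lambda_{kj}=M_kM_j$ to force the diagonal or anti-diagonal shape, then pinning the remaining entry from the $M_j$-equation, so uniqueness falls out of the computation), whereas you verify a guessed $H$ and obtain uniqueness separately from Schur's lemma; both are sound.
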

%%%%
\begin{proof}
At first we note that,
thanks to Lemma \ref{Lemma_tr}, 
the condition $x\in\mathbb{R}^m$ is equivalent to the condition 
$x_{kj}, x_{kji} \in \mathbb{R}$ for all $\{i,j,k\}\subset \{1,2,\ldots,n\}$.
Therefore ``only if" part follows from Lemma \ref{Lemma_Necessory} immediately.
So we shall show the ``if" part.
We only show the case (i): the other case (ii) can be treated in the same manner.
We assume $x\in\mathcal{S}_{kj}^{(0)}(a)\cap\mathbb{R}^m$
and let $(M_1,M_2,\ldots,M_n)$ be a tuple of matrices given in \eqref{Eq_rep_case1}.
We set
\[
\Lambda_{kj}:=M_kM_j=
 \begin{pmatrix}
   \lambda_{kj}^+ & \\
    & \lambda_{kj}^-
  \end{pmatrix}.
\]
Our goal is to determine the matrix $H$ satisfying
\begin{equation}\label{Eq_invHerm_proof}
\bar{M}_s^T HM_s=H \quad (s=1,2,\ldots,n).
\end{equation}
Combining the equations $\bar{M}_j^T HM_j=H$ and $\bar{M}_k^T HM_k=H$,
we have
\begin{equation}\label{Eq_LambdaH}
\bar{\Lambda}_{kj}H\Lambda_{kj}=H,
\end{equation}
where we used $\Lambda_{kj}=\Lambda_{kj}^T$.

We set
\[ 
  H=\begin{pmatrix}
  h_{11} & h_{12} \\
  h_{21} & h_{22}
  \end{pmatrix}
\]
and then obtain
\begin{equation}\label{Eq_LHL}
  \bar{\Lambda}_{kj}H\Lambda_{kj}=
   \begin{pmatrix}
    |\lambda_{kj}^+|^2 h_{11} & \overline{\lambda_{kj}^+} \lambda_{kj}^- h_{12} \\[6pt]
    \overline{\lambda_{kj}^-} \lambda_{kj}^+ h_{21} & |\lambda_{kj}^-|^2 h_{22}
   \end{pmatrix}.
\end{equation}
%
%%%%%%%case1
\medskip \\
\noindent
\textbf{Case 1.} 
We consider the case $x_{kj}^2-4>0$.
This yields 
$r_{kj}\in\mathbb{R}\setminus\{0\}$ and $\lambda_{kj}^\pm\in\mathbb{R}\setminus\{0,\pm1\}$.
Therefore we have $M_i \in \mathrm{SL}(2,\mathbb{R})\,(i=1,2,\ldots,n)$ and 
\[
\bar{\Lambda}_{kj}H\Lambda_{kj}=
   \begin{pmatrix}
    |\lambda_{kj}^+|^2 h_{11} & \lambda_{kj}^+ \lambda_{kj}^- h_{12} \\[6pt]
    \lambda_{kj}^- \lambda_{kj}^+ h_{21} & |\lambda_{kj}^-|^2 h_{22}
   \end{pmatrix}
 =
   \begin{pmatrix}
    |\lambda_{kj}^+|^2 h_{11} &  h_{12} \\[6pt]
     h_{21} & |\lambda_{kj}^-|^2 h_{22}
   \end{pmatrix}.
\]
Since $|\lambda_{kj}^\pm|\neq 1,0$ holds, we have
\[
h_{11}=h_{22}=0
\]
from the equation \eqref{Eq_LambdaH}.
Next we proceed to determine $h_{12}$ and $h_{21}$.
Let us substitute
\[
H=\begin{pmatrix}
  0 & h_{12} \\
  h_{21} & 0
\end{pmatrix}
\]
into the equation $\bar{M}_{j}^T HM_{j}=H$.
Now we forget the actual expressions of $M_j$ and 
write as \eqref{Eq_Ms}, that is, we express
\[
M_j=\begin{pmatrix}
 u_{11}^{(j)} & u_{12}^{(j)}\\
 u_{21}^{(j)} & u_{22}^{(j)}
 \end{pmatrix}.
\]
Then we have
\begin{equation}\label{Eq_LastMat0_case1}
\begin{aligned}
\bar{M}_{j}^T HM_{j} &= M_j^THM_j
= \begin{pmatrix}
  u_{21}^{(j)}u_{11}^{(j)}(h_{21}+h_{12}) & u_{21}^{(j)}u_{12}^{(j)}h_{21}+u_{11}^{(j)}u_{22}^{(j)}h_{12} \\
  u_{11}^{(j)}u_{22}^{(j)}h_{21}+u_{12}^{(j)}u_{21}^{(j)}h_{12} & u_{12}^{(j)}u_{22}^{(j)}(h_{21}+h_{12})
  \end{pmatrix},
\end{aligned}
\end{equation}
where we used $M_j \in\mathrm{SL}(2,\mathbb{R})$ in the first equality.
Therefore the equation $\bar{M}_{j}^T HM_{j}=H$ becomes
\begin{equation}\label{Eq_LastMat_case1}
\begin{pmatrix}
  u_{21}^{(j)}u_{11}^{(j)}(h_{21}+h_{12}) & u_{21}^{(j)}u_{12}^{(j)}h_{21}+u_{11}^{(j)}u_{22}^{(j)}h_{12} \\
  u_{11}^{(j)}u_{22}^{(j)}h_{21}+u_{12}^{(j)}u_{21}^{(j)}h_{12} & u_{12}^{(j)}u_{22}^{(j)}(h_{21}+h_{12})
  \end{pmatrix}
  =
\begin{pmatrix}
  0 & h_{12} \\
  h_{21} & 0
\end{pmatrix}.
\end{equation}
Now we focus on the equation for the $(1,2)$-entry
\begin{equation}\label{Eq_(1,2)}
u_{21}^{(j)}u_{12}^{(j)}h_{21}+u_{11}^{(j)}u_{22}^{(j)}h_{12}=h_{12}.
\end{equation}
The condition $\det(M_j)=1$ yields $u_{11}^{(j)}u_{22}^{(j)}-u_{12}^{(j)}u_{21}^{(j)}=1$.
Substituting this into \eqref{Eq_(1,2)}, we have
\[
(h_{12}+h_{21})u_{12}^{(j)}u_{21}^{(j)}=0.
\] 
Since $[\rho] \in \mathcal{M}_{kj}^{(0)}(a)$ now, it holds that $u_{12}^{(j)}u_{21}^{(j)}\neq0$
(cf. the proof of Theorem \ref{Theorem_Parametrization}).
Therefore we have $h_{21}=-h_{12}$ and this satisfies the other equations in \eqref{Eq_LastMat_case1}. 
Then we have 
\begin{equation}\label{Eq_H_case1_0}
H=\begin{pmatrix}
   0 & h_{12} \\
   -h_{12} & 0 
\end{pmatrix}
\end{equation}
is a solution for $\bar{M}_{j}^T HM_{j}=H$.
Here we note that the actual expressions of the entries of $M_j$ were not used in the above calculation \eqref{Eq_LastMat0_case1} and \eqref{Eq_LastMat_case1}. 
Therefore, if we write 
$M_s ~(s\in\{1,2,\ldots,n\}\setminus\{j\})$
in the form \eqref{Eq_Ms},
then the equation $\bar{M}_{s}^T HM_{s}=H$ can be expressed in the same form with \eqref{Eq_LastMat_case1}.
From $h_{21}=-h_{12}$, the equality $\bar{M}_{s}^T HM_{s}=H$ immediately follows.
This means that \eqref{Eq_H_case1_0} is a solution of the equations \eqref{Eq_invHerm_proof} in the case $x_{kj}^2-4>0$.
Since $h_{12}$ can be taken arbitrarily, we set $h_{12}=h\sqrt{-1}~(h \in \mathbb{R})$ and obtain the assertion for $x_{kj}^2-4>0$.
%
%%%%%%%case2
\medskip \\
\noindent
\textbf{Case 2.}
We consider the case $x_{kj}^2-4<0$. 
In this case, the following lemma holds.
%%%%
\begin{lemma}
If $x_{kj}^2-4<0$, then $\lambda_{kj}^\pm\notin\mathbb{R}$ and 
$|\lambda_{kj}^+|=|\lambda_{kj}^-|=1$.
\end{lemma}
%%%%
\begin{proof}
Since $x_{kj}^2-4<0$ holds now, by the definition \eqref{r_kj},
the number $r_{kj}$ is a non-zero purely imaginary number.
Then the first assertion is clear. 
In this case it holds that $\overline{\lambda_{kj}^+}=\lambda_{kj}^-$, then we have
\[
|\lambda_{kj}^+|^2=\lambda_{kj}^+\overline{\lambda_{kj}^+}=\lambda_{kj}^+\lambda_{kj}^-=1.
\]
We can show $|\lambda_{kj}^-|^2=1$ in a similar manner.
\end{proof}
%%%%
With this lemma in mind, we have
\[
  \bar{\Lambda}_{kj}H\Lambda_{kj}=
   \begin{pmatrix}
    h_{11} & (\lambda_{kj}^-)^2 h_{12} \\[6pt]
    (\lambda_{kj}^+)^2 h_{21} &  h_{22}
   \end{pmatrix}
\]
from \eqref{Eq_LHL}.
Therefore, the equation \eqref{Eq_LambdaH} turns into
\[
\begin{pmatrix}
    h_{11} & (\lambda_{kj}^-)^2 h_{12} \\[6pt]
    (\lambda_{kj}^+)^2 h_{21} &  h_{22}
   \end{pmatrix}
   =
\begin{pmatrix}
  h_{11} & h_{12} \\
  h_{21} & h_{22}
\end{pmatrix}.
\]
Since $\lambda_{kj}^\pm \notin \{0,\pm1\}$ holds now, we have
\[
h_{12}=h_{21}=0.
\]
Next, we proceed to determine $h_{11}$ and $h_{22}$. 
To consider the equations \eqref{Eq_invHerm_proof}, 
we give a simple but important observation for the entries of $M_s\,(s=1,2,\ldots,n)$.
%%%%
\begin{lemma}\label{Lemma_symmetry}
Assume $(a,x)\in\mathbb{R}^{n+1}\times (\mathcal{S}^\circ(a)\cap\mathbb{R}^m)$ and 
$x_{kj}^2-4<0$. 
For $[\rho] \in \mathcal{M}_{kj}^{(0)}(a)$, let \eqref{Eq_Ms} be the representative \eqref{Eq_rep_case1}.
Then, 
\begin{align}
\bar{u}_{11}^{(s)}&=u_{22}^{(s)}, \label{Eq_Lemma_symmetry1} \\
\frac{u_{12}^{(s)}}{\bar{u}_{21}^{(s)}}&=-\frac{\psi(x_{kj},a_k,a_j)}{x_{kj}^2-4}  \label{Eq_Lemma_symmetry2}
\end{align}
hold for any $s\in\{1,2,\ldots,n\}$.
\end{lemma}
%%%%
This lemma can be shown by applying $\bar{r}_{kj}=-r_{kj}$ and
$\overline{\lambda_{kj}^+}=\lambda_{kj}^-$
to \eqref{Eq_rep_case1}, 
\eqref{Eq_M_i_diag} and \eqref{Eq_u12_u21_i_case1}.
Let us consider the equations \eqref{Eq_invHerm_proof}.
As in the above case, we write
\[
M_j=\begin{pmatrix}
 u_{11}^{(j)} & u_{12}^{(j)}\\
 u_{21}^{(j)} & u_{22}^{(j)}
 \end{pmatrix}
\]
and substitute this into the equation $\bar{M}_j^T H M_j=H$.
Then we have
\begin{equation}\label{Eq_LastMat_case2}
\begin{pmatrix}
  u_{11}^{(j)}u_{22}^{(j)} h_{11}+|u_{21}^{(j)}|^2 h_{22} & u_{12}^{(j)} u_{22}^{(j)} h_{11}+\bar{u}_{21}^{(j)}u_{22}^{(j)} h_{22} \\
  \bar{u}_{12}^{(j)}u_{11}^{(j)}h_{11}+u_{11}^{(j)}u_{21}^{(j)}h_{22} & |u_{12}^{(j)}|^2h_{11}+u_{11}^{(j)}u_{22}^{(j)}h_{22}
  \end{pmatrix}
  =
\begin{pmatrix}
  h_{11} & 0 \\
  0 & h_{22}
\end{pmatrix},
\end{equation}
where we used \eqref{Eq_Lemma_symmetry1}.
We focus on the equation for the $(1,1)$-entry
\[
u_{11}^{(j)}u_{22}^{(j)} h_{11}+|u_{21}^{(j)}|^2 h_{22}=h_{11}.
\]
From $u_{11}^{(j)}u_{22}^{(j)}-u_{12}^{(j)}u_{21}^{(j)}=1$,
this equation becomes
\begin{equation}\label{Eq_(1,1)entry}
u_{21}^{(j)}(u_{12}^{(j)} h_{11}+\bar{u}_{21}^{(j)} h_{22})=0.
\end{equation}
Since $[\rho] \in \mathcal{M}_{kj}^{(0)}(a)$ now, it holds that $u_{12}^{(j)}u_{21}^{(j)}\neq0$.
Therefore, we can solve \eqref{Eq_(1,1)entry} with respect to $h_{22}$.
Then we obtain
\begin{equation}\label{Eq_h22_case2}
h_{22}=-\frac{u_{12}^{(j)}}{\bar{u}_{21}^{(j)}}h_{11}=\frac{\psi(x_{kj},a_k,a_j)}{x_{kj}^2-4} h_{11},
\end{equation}
where we used \eqref{Eq_Lemma_symmetry2} in the second equality.
It is easy to see that \eqref{Eq_h22_case2} satisfies the other equations in \eqref{Eq_LastMat_case2}.
Therefore,
\begin{equation}\label{Eq_H_case2_0}
H=\begin{pmatrix}
   h_{11} & 0 \\[5pt]
   0 & -\dfrac{u_{12}^{(j)}}{\bar{u}_{21}^{(j)}}h_{11} 
\end{pmatrix}
  =\begin{pmatrix}
   h_{11} & 0 \\[5pt]
   0 & \dfrac{\psi(x_{kj},a_k,a_j)}{x_{kj}^2-4} h_{11} 
\end{pmatrix}
\end{equation}
is a solution for the equation $\bar{M}_j^T H M_j=H$.
Thanks to Lemma \ref{Lemma_symmetry}, we can check that \eqref{Eq_H_case2_0} satisfies the other equations in \eqref{Eq_invHerm_proof} as in Case 1.
This means that \eqref{Eq_H_case2_0} is a solution of the equations \eqref{Eq_invHerm_proof} in the case $x_{kj}^2-4<0$.
Since $h_{11}$ can be taken arbitrarily, we set $h_{11}=h~(h \in \mathbb{R})$ and obtain the assertion for $x_{kj}^2-4<0$.
This completes the proof.
\end{proof}
For reader's convenience, we give a lemma used to consider the case (ii) of Proposition \ref{Prop_UnitaryMonodromy},
which corresponds to Lemma \ref{Lemma_symmetry}.
%%%%
\begin{lemma}
Assume $(a,x)\in\mathbb{R}^{n+1}\times (\mathcal{S}^\circ(a)\cap\mathbb{R}^m)$ and 
$x_{kj}^2-4<0$. 
For $[\rho] \in \mathcal{M}_{kj}^{(i_0)}(a)$ with some $i_0 \in \{1,2,\ldots, n\}\setminus\{j,k\}$, let \eqref{Eq_Ms} be the representative \eqref{Eq_rep_case2}.
Then 
\begin{align*}
\bar{u}_{11}^{(s)}&=u_{22}^{(s)},  \\
\frac{u_{12}^{(s)}}{\bar{u}_{21}^{(s)}}&=-\frac{\psi(x_{kji_0},x_{kj},a_{i_0})}{x_{kj}^2-4} 
\end{align*}
hold for all $s\in\{1,2,\ldots,n\}$.
\end{lemma}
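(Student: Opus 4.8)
The plan is to prove this exactly as Lemma~\ref{Lemma_symmetry}, only replacing the case-one representative by the case-two representative \eqref{Eq_rep_case2}. The two facts that drive the whole computation are the same as before: because $x_{kj}^2-4<0$, the square root $r_{kj}$ fixed in \eqref{r_kj} is purely imaginary, so $\bar r_{kj}=-r_{kj}$, and hence $\overline{\lambda_{kj}^+}=\lambda_{kj}^-$ and $\overline{\lambda_{kj}^-}=\lambda_{kj}^+$. I would also record once and for all that the hypothesis $(a,x)\in\mathbb{R}^{n+1}\times(\mathcal S^\circ(a)\cap\mathbb R^m)$ makes every trace coordinate real; in particular $a_{i_0}$, $x_{kj}$, $x_{kji_0}$, $x_{ii_0}$ are real, so $\psi(x_{kji_0},x_{kj},a_{i_0})$ and $r_{kj}^2=x_{kj}^2-4$ are real, and the fourth-order trace $x_{kjii_0}$ of \eqref{Eq_xkjii0} is real, being a real-coefficient polynomial in these quantities.

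First I would verify $\bar u_{11}^{(s)}=u_{22}^{(s)}$ for each block of \eqref{Eq_rep_case2}. For $M_j$ and $M_k$ the diagonal entries are given explicitly in \eqref{Eq_rep_case2}, and for $M_{i_0}$ and the remaining $M_i$ in \eqref{Eq_u11u22_i_case2}; in every case, conjugating and applying $\bar r_{kj}=-r_{kj}$ and $\overline{\lambda_{kj}^\pm}=\lambda_{kj}^\mp$ carries the expression for $\bar u_{11}^{(s)}$ term by term onto the stated expression for $u_{22}^{(s)}$. This is a one-line substitution for each matrix, identical in spirit to the $i_0=0$ case.

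The content is in the ratio identity. For $M_{i_0}$ it is immediate from \eqref{Eq_rep_case2}, since $u_{21}^{(i_0)}=1$ is real and $u_{12}^{(i_0)}=-\psi(x_{kji_0},x_{kj},a_{i_0})/r_{kj}^2$. For $M_j$ I would conjugate the formula for $u_{21}^{(j)}$ in \eqref{Eq_u12u21_ij_case2}: using $\bar r_{kj}=-r_{kj}$, $\overline{\lambda_{kj}^+}=\lambda_{kj}^-$ and the already-proved $\bar u_{11}^{(i_0)}=u_{22}^{(i_0)}$, $\bar u_{22}^{(i_0)}=u_{11}^{(i_0)}$, the numerator bracket of $\bar u_{21}^{(j)}$ becomes exactly the numerator bracket $N$ of $u_{12}^{(j)}$. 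Then $u_{12}^{(j)}=-N/r_{kj}$ and $\bar u_{21}^{(j)}=r_{kj}N/\psi(x_{kji_0},x_{kj},a_{i_0})$, so the quotient collapses to $-\psi(x_{kji_0},x_{kj},a_{i_0})/r_{kj}^2=-\psi(x_{kji_0},x_{kj},a_{i_0})/(x_{kj}^2-4)$. The matrix $M_k$ then needs no separate work: from \eqref{Eq_rep_case2} one has $u_{12}^{(k)}=-\lambda_{kj}^+u_{12}^{(j)}$ and $u_{21}^{(k)}=-\lambda_{kj}^-u_{21}^{(j)}$, so $\overline{u_{21}^{(k)}}=-\lambda_{kj}^+\overline{u_{21}^{(j)}}$ and the factors $-\lambda_{kj}^+$ cancel in the ratio, reducing it to the value already found for $M_j$.

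The remaining matrices $M_i$ with $i\notin\{j,k,i_0\}$ are the only place where any real bookkeeping is required, and this is the step I expect to be the main obstacle, since here $u_{12}^{(i)}$ and $u_{21}^{(i)}$ in \eqref{Eq_u12u21_ij_case2} carry the bilinear products $u_{11}^{(i)}u_{11}^{(i_0)}$, $u_{22}^{(i)}u_{22}^{(i_0)}$ together with the quartic trace $x_{kjii_0}$. The crucial observations are that conjugation sends $u_{22}^{(i)}u_{22}^{(i_0)}$ to $u_{11}^{(i)}u_{11}^{(i_0)}$ by the first identity applied at both indices, that $x_{ii_0}$ and $x_{kjii_0}$ are real, and that $\bar r_{kj}=-r_{kj}$, $\overline{\lambda_{kj}^+}=\lambda_{kj}^-$. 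Threading these through $\bar u_{21}^{(i)}$ again reproduces the numerator of $u_{12}^{(i)}$, so the same cancellation as for $M_j$ gives $u_{12}^{(i)}/\bar u_{21}^{(i)}=-\psi(x_{kji_0},x_{kj},a_{i_0})/(x_{kj}^2-4)$. As this covers every $s\in\{1,\dots,n\}$, the lemma follows.
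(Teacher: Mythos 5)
Your proposal is correct and follows exactly the route the paper intends: the paper states this lemma without proof, referring back to Lemma \ref{Lemma_symmetry}, whose own justification is precisely the substitution $\bar{r}_{kj}=-r_{kj}$, $\overline{\lambda_{kj}^{\pm}}=\lambda_{kj}^{\mp}$ together with the reality of the trace coordinates (including $x_{kji i_0}$ from \eqref{Eq_xkjii0}) applied to the normal-form entries, here \eqref{Eq_rep_case2}, \eqref{Eq_u11u22_i_case2} and \eqref{Eq_u12u21_ij_case2}. Your verification, including the reduction of $M_k$ to $M_j$ via the cancelling factors $-\lambda_{kj}^{+}$ and the handling of the bilinear terms for $i\notin\{j,k,i_0\}$, checks out.
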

%%%%

%Main theorem
By considering the signatures of the invariant Hermitian forms given in Proposition \ref{Prop_UnitaryMonodromy}, we determine the signatures of the unitary monodoromies in $\mathcal{M}^\circ(a)$.
Here we note that, as seen in Remark \ref{Rem_invHerm}, the signature of the monodromy in $\mathcal{M}^\circ(a)$
does not depend on the choice of chart $\mathcal
{M}_{kj}^{(i)}(a)$, to which it belongs.
Summarizing these observations, we obtain the following theorem.
%%%%
\begin{theorem}\label{Theorem_main}
 Assume \eqref{Eq_a_R} and fix the bijection 
 $\varphi:\mathcal{S}^\circ(a) \to \mathcal{M}^\circ(a)$
 by \eqref{Eq_varphi}.
 Then the monodromy $\varphi(x)$ is unitary if and only if $x \in \mathcal{S}^\circ(a)\cap\mathbb{R}^m$.
 If this is the case, the signature of $\varphi(x)$ is determined as follows.
 \begin{itemize}
 	\item If $ x \in \mathcal{S}_{kj}^{(0)}(a)\cap\mathbb{R}^m$ for some $(j,k)\in\mathcal{I}_n$, then
		\begin{itemize}
			  \item the signature is $(1,1)$ if and only if $x_{kj}^2-4>0$, or 
			  $x_{kj}^2-4<0$ and $\psi(x_{kj},a_k,a_j)>0$.
		 	 \item the signature is $(2,0)$ or $(0,2)$ if and only if $x_{kj}^2-4<0$ and $\psi(x_{kj},a_k,a_j)<0$. 
		\end{itemize}
	\item  If $x \in \mathcal{S}_{kj}^{(i_0)}(a)\cap\mathbb{R}^m$ for some $(j,k)\in\mathcal{I}_n$ and $i_0\in \{1,2,\ldots,n\}\setminus\{j,k\}$, then
		\begin{itemize}
			  \item the signature is $(1,1)$ if and only if $x_{kj}^2-4>0$, or 
			  $x_{kj}^2-4<0$ and $\psi(x_{kj{i_0}},x_{kj},a_{i_0})>0$.
		 	 \item the signature is $(2,0)$ or $(0,2)$ if and only if $x_{kj}^2-4<0$ and $\psi(x_{kj{i_0}},x_{kj},a_{i_0})<0$.
		  \end{itemize}
 \end{itemize}
 Here the subset $\mathcal{I}_n$ is given by \eqref{Eq_I_n}.
 The signature of the monodromy $\varphi(x)$ does not depend on the choice of chart $\mathcal
{S}_{kj}^{(i)}(a)$, to which $x\in\mathcal{S}^\circ(a)$ belongs.
\end{theorem}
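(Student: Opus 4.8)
The plan is to derive Theorem~\ref{Theorem_main} as an essentially formal consequence of Propositions~\ref{Proposition_rep_Herm} and~\ref{Prop_UnitaryMonodromy}, so that almost no new computation is required. First I would dispatch the equivalence in the first assertion by chaining the two propositions directly: by Proposition~\ref{Proposition_rep_Herm}, the monodromy $\varphi(x)$ is unitary if and only if its representative $(M_1,\ldots,M_n)$ admits an invariant non-degenerate Hermitian form, and by Proposition~\ref{Prop_UnitaryMonodromy} (using the standing assumption $a\in\mathbb{R}^{n+1}$) this happens precisely when $x\in\mathcal{S}^\circ(a)\cap\mathbb{R}^m$. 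Composing these gives ``unitary $\iff x\in\mathcal{S}^\circ(a)\cap\mathbb{R}^m$'' with no further argument.

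For the signatures, the strategy is to read off the definiteness of the explicit matrix $H$ furnished by Proposition~\ref{Prop_UnitaryMonodromy} and feed it into the dichotomy of Proposition~\ref{Proposition_rep_Herm}, namely that $H$ definite forces signature $(2,0)$ or $(0,2)$ while $H$ indefinite forces $(1,1)$. Treat case (i), where $x\in\mathcal{S}_{kj}^{(0)}(a)\cap\mathbb{R}^m$. When $x_{kj}^2-4>0$, the matrix $H=h\left(\begin{smallmatrix} & \sqrt{-1} \\ -\sqrt{-1} & \end{smallmatrix}\right)$ is anti-diagonal with $\det H=-h^2<0$; since the eigenvalues of a $2\times 2$ Hermitian matrix are real and multiply to $\det H$, a negative determinant forces eigenvalues of opposite sign, so $H$ is indefinite and the signature is $(1,1)$ unconditionally. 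When $x_{kj}^2-4<0$, the matrix $H$ is diagonal with entries $h$ and $h\,\psi(x_{kj},a_k,a_j)/(x_{kj}^2-4)$, so $H$ is definite exactly when this ratio is positive and indefinite exactly when it is negative; because $x_{kj}^2-4<0$, the ratio carries the opposite sign to $\psi(x_{kj},a_k,a_j)$, which reproduces the two bullet points verbatim. Case (ii) is handled identically, replacing $\psi(x_{kj},a_k,a_j)$ throughout by $\psi(x_{kji_0},x_{kj},a_{i_0})$, using the form of $H$ in \eqref{Eq_invHerm_case2}.

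Finally, the asserted chart-independence of the signature follows from Remark~\ref{Rem_invHerm}: the signature is an invariant of the isomorphism class $\varphi(x)\in\mathcal{M}^\circ(a)$ rather than of the chosen representative, so it cannot depend on which chart $\mathcal{S}_{kj}^{(i)}(a)$ the point $x$ is viewed in, even when $x$ lies in several charts at once.

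Since Propositions~\ref{Proposition_rep_Herm} and~\ref{Prop_UnitaryMonodromy} already isolate all the analytic content, there is no genuine obstacle here; the theorem is a corollary. The only points demanding care are the sign bookkeeping in the diagonal subcase (keeping track that $x_{kj}^2-4<0$ flips the sign of $\psi$) and the observation that the anti-diagonal $H$ arising when $x_{kj}^2-4>0$ is indefinite for \emph{every} admissible $x$, so that no auxiliary condition on $\psi$ enters that branch.
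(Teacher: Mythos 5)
Your proposal is correct and follows exactly the route the paper takes: the paper derives the theorem by combining Proposition \ref{Proposition_rep_Herm} with Proposition \ref{Prop_UnitaryMonodromy}, reading off the definiteness of the explicit Hermitian matrices \eqref{Eq_invHerm_case1} and \eqref{Eq_invHerm_case2} (anti-diagonal $H$ has negative determinant, hence indefinite; diagonal $H$ is definite iff $\psi/(x_{kj}^2-4)>0$, i.e.\ iff $\psi<0$), and invoking Remark \ref{Rem_invHerm} for chart-independence. Your sign bookkeeping matches the statement, so nothing is missing.
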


\appendix
\section{Derivation of Type 3 relation in Proposition \ref{Prop_invariants}}
\label{Appendix}

In this appendix, we give a detailed explanation for Remark \ref{remark_type3}, that is, we show that the condition $f_r(x)=0$ is equal to the condition $M_{n+1} \in \mathcal{O}(a_{n+1})$, by using Lemma \ref{Lemma_reduction}. 
Since $\tr(M_{n+1})=\tr(M_nM_{n-1}\cdots M_1)$ holds, it is sufficient to show that the condition $f_r(x)=0$ is equal to the equation $\tr(M_nM_{n-1}\cdots M_1)=a_{n+1}$.
Let us fix $n \ge 4$ and take indices $i_1,i_2,\ldots,i_k$ such that
\[
1\le i_1 <i_2<\cdots<i_k \le n.
\]
For these indices, we set
\[
M_{i_k,i_l}:=M_{i_k} M_{i_{k-1}} \cdots M_{i_l} \quad (1\le l \le k).
\]
Then for $k=1,2,3$, we have $\tr(M_{i_1,i_1})=\tr(M_{i_1})$, $\tr(M_{i_2,i_1})=\tr(M_{i_2}M_{i_1})$ and $\tr(M_{i_3,i_1})=\tr(M_{i_3}M_{i_2} M_{i_1})$, respectively.
Next, we proceed to consider $\tr(M_{i_k,i_1})$ for $k \ge 4$.
By setting 
\[
A=M_{i_k,i_{4}},\quad B=M_{i_{3}},\quad C=M_{i_2}, \quad D=M_{i_1}
\]
in Lemma \ref{Lemma_reduction}, we have
\begin{equation}\label{App_Reduction}
\begin{split}
\tr (M_{i_k,i_1}) =&\frac{1}{2}\{\tr(M_{i_k,{i_4}}) \tr(M_{i_3}) \tr(M_{i_2}) \tr(M_{i_1}) \\
&+\tr(M_{i_k,{i_4}})\tr(M_{i_3}M_{i_2}M_{i_1}) + \tr(M_{i_3})\tr(M_{i_k,{i_4}}M_{i_2}M_{i_1}) \\
&+\tr(M_{i_2}) \tr(M_{i_k,{i_4}}M_{i_3}M_{i_1})+\tr(M_{i_1})\tr(M_{i_k,{i_4}}M_{i_3}M_{i_2}) \\
&+\tr(M_{i_k,{i_3}}) \tr(M_{i_2}M_{i_1}) -\tr(M_{i_k,{i_4}}M_{i_2})\tr(M_{i_3}M_{i_1}) +\tr(M_{i_k,{i_4}}M_{i_1})\tr(M_{i_3}M_{i_2}) \\
&-\tr(M_{i_k,{i_4}})\tr(M_{i_3})\tr(M_{i_2}M_{i_1})-\tr(M_{i_k,{i_4}})\tr(M_{i_1})\tr(M_{i_3}M_{i_2})\\
&-\tr(M_{i_3}) \tr(M_{i_2}) \tr(M_{i_k,{i_4}}M_{i_1})-\tr(M_{i_1}) \tr(M_{i_2}) \tr(M_{i_k,{i_3}})\}.
\end{split}
\end{equation}
Thanks to this relation, we can express the trace of the product of $k$ matrices by using the trace of the product of at most $k-1$ matrices.
Hence, we can determine $\tr(M_{i_k,i_1})$ for $k\ge 4$ inductively.
As a consequence, we can express $\tr(M_{i_n,i_1})=\tr(M_nM_{n-1}\cdots M_1)$ in terms of 
$\tr(M_{i_1,i_1})$, $\tr(M_{i_2,i_1})$ and $\tr(M_{i_3,i_1})$.

Noting that the structure of the recurrence relation \eqref{App_Reduction} is the same with the one \eqref{Poly_g} for the polynomials $g_{i_k,i_{k-1},\ldots,i_1}(x)$,
we see that $g_{i_n,i_{n-1},\ldots,i_1}(x)=g_{n,n-1,\ldots,1}(x)$ expresses $\tr(M_nM_{n-1}\cdots M_1)$ 
\if0 in terms of
  \[
  \begin{aligned}
  g_{i_1}(x)=a_{i_1} \quad &(1\le i_1\le n), \\
  g_{i_2,i_1}(x)=x_{i_2i_1} \quad &(1\le i_1<i_2\le n),\\
  g_{i_3,i_2,i_1}(x)=x_{i_3i_2i_1} \quad &(1\le i_1<i_2<i_3\le n)
  \end{aligned}
  \]
  \fi
under the assignments $a_{i_1}=\tr(M_{i_1,i_1})$, 
$x_{i_2i_1}=\tr(M_{i_2,i_1})$ and 
$x_{i_3i_2i_1}=\tr(M_{i_3,i_1})$.

As a result, we see that the condition $\tr(M_{n+1})=\tr(M_nM_{n-1}\cdots M_1)=a_{n+1}$ is equal to $f_r(x)=0$.

%%%%%%%%%%%%%%%%%%%%%%%%%%%%
%%%%%%%%%%%%%%%%%%%%%%%%%%%%
\section*{Acknowledgement}

The author would like to express his gratitude to Professor Yoshishige Haraoka for many valuable comments and suggestions for this work and manuscript.
The author also wishes to express his thanks to Professor Saiei-Jaeyeong Matsubara-Heo for reading manuscript and helpful comments.
The author is also grateful to Prof. Hironobu Kimura and Prof. Hiroshi Ogawara for helpful discussions.
Lastly, the author thank the anonymous referee for many valuable comments, which make this paper more readable.
This work was supported by JSPS KAKENHI Grant Number 22J12306.
%%%%%%%%%%%%%%%%%%%%%%%%%%%%
%%%%%%%%%%%%%%%%%%%%%%%%%%%%

%%%References%%%%

\medskip\noindent
Shunya Adachi\\*
Graduate School of Science and Technology\\*
Kumamoto University\\*
Kumamoto 860-8555\\*
Japan\\*
{\tt 200d7101@st.kumamoto-u.ac.jp}\\*

\noindent
Current address: \\*
Department of Mathematics and Informatics\\*
Faculty of Science\\*
Chiba University\\*
Chiba 263-8522 \\*
Japan\\*
{\tt sadachi@math.s.chiba-u.ac.jp}\\*

\end{document}